\newcommand{\R}{\mathbb{R}}
\newcommand{\abs}[1]{\left\vert #1 \right\vert}
\newcommand{\norm}[1]{\left\Vert #1 \right\Vert}
\newcommand{\set}[1]{\left\lbrace #1\right\rbrace}
\newcommand{\sse}{\subseteq}
\newcommand{\sprod}[1]{\left\langle #1 \right\rangle}
\newcommand{\prb}[1]{\mathbb{P}\left( #1 \right)}
\newcommand{\erw}[1]{\mathbb{E}\left( #1 \right)}
\newcommand{\1}{\text{1}}
\newcommand{\calS}{\mathcal{S}}
\newcommand{\dist}{\text{dist}}
\newcommand{\sgn}{\text{sgn}}
\newcommand{\supp}{\text{supp}}
\DeclareMathOperator{\argmin}{argmin}
\newcommand{\pos}{\text{pos}}
\newcommand{\diag}{\text{diag}}
\newtheorem{lem}{Lemma}
\newtheorem{prop}[lem]{Proposition}
\newtheorem{theo}[lem]{Theorem}
\newtheorem{cor}[lem]{Corollary}
\newtheorem{defi}[lem]{Definition}
\newcommand\copyrighttext{%
  \footnotesize \copyright IEEE 2016. This paper has been published on IEEE Xplore, DOI \href{http://dx.doi.org/10.1109/tit.2016.2569122}{{\tt 10.1109/tit.2016.2569122}}}
\newcommand\copyrightnotice{%
\begin{tikzpicture}[remember picture,overlay]
\node[anchor=south,yshift=65pt] at (current page.south) {\fbox{\parbox{\dimexpr\textwidth-\fboxsep-\fboxrule\relax}{\copyrighttext}}};
\end{tikzpicture}%
}
\title{ Optimal Choice of Weights for \\ Sparse Recovery With Prior Information}
\author{Axel Flinth\thanks{ This work was supported by the Deutsche Forschungsgemeinschaft, DFG
under Grant KU 1446/18-1.
 The author is with the Institut f\"{u}r Mathematik, Technische Universit\"{a}t Berlin, Germany. (e-mail: flinth@math.tu-berlin.de).
  }
 }
\begin{document}

\maketitle
\begin{abstract}
Compressed sensing deals with the recovery of sparse signals from linear measurements. Without any additional information, it is possible to recover an $s$-sparse signal using $m \gtrsim s \log(d/s)$ measurements in a robust and stable way. Some applications provide additional information, such as on the location of the support of the signal. Using this information, it is conceivable the threshold amount of measurements can be lowered. A proposed algorithm for this task is \emph{weighted $\ell_1$-minimization}. Put shortly, one modifies standard $\ell_1$-minimization by assigning different weights to different parts of the index set $[1, \dots d]$. The task of choosing the weights is however non-trivial. 

This paper provides a complete answer to the question of an optimal choice of the weights. In fact, it is shown that it is possible to directly calculate unique weights that are optimal in the sense that the threshold amount of measurements needed for exact recovery is minimized. The proof uses recent results about the connection between convex geometry and compressed sensing-type algorithms.

\underline{Keywords:} Compressed sensing, Gaussian matrices, weighted basis pursuit, convex geometry.
\end{abstract}

\section{Introduction}
	
	At the heart of the theory of compressed sensing is the paradigm that it is possible to recover a sparse vector $x_0 \in \R^d$ using few linear measurements. A widely used method to perform this reconstruction is Basis Pursuit, i.e. to solve the minimization problem
	\begin{equation}
		\min \norm{x}_1 \text{ subject to } Ax=b. \tag{$P_1$}
\end{equation}	 
A fundamental result is that $ m \gtrsim s \log{d/s}$ Gaussian random measurements are sufficient for Basis Pursuit to succeed at recovering an $s$-sparse vector with overwhelmingly high probability\cite{CandesRombergTao2006}.

	The main problem about recovering a signal in this setting is in fact to decide where its support $\supp \ x_0$ is situated. Indeed, if we knew the exact position of $\supp \ x_0$, we would be able to recover $x_0$ by simply solving a set of linear equations which have more equations than unknowns, provided $m\geq s$. It is thus plausible that if we are given prior information about the location of $\supp \ x_0$, it should be possible to use this information to lower the threshold amount of measurements needed to secure recovery.  To be precise, let us assume that we know that a fraction $\alpha$ of the indices in some set $S$ are in the support of $x_0$. This can be interpreted as the probability for an $i \in S$ to be an element of $\supp \ x_0$. We will subsequently denote $T=\supp \ x_0$, for the sake of brevity. Intuitively, it seems that if $\alpha$ is high while $\rho := \abs{S}/d$ is not too large, we should be able to find the solution using less equations than if we did not have the initial guess.
	
 \copyrightnotice	
	
	This exact situation was investigated in the paper \cite{FriedMansSaabYil2012}. Their proposed solution  is to use a weighted $\ell_1$-approach, namely to solve the minimization problem
	\begin{equation}
		\min \norm{x}_{1,w} \text{ subject to } Ax=b  \tag{$P_1^w$},
	\end{equation}	
	where $\norm{\cdot}_{1,w}$ denotes the weighted $\ell_1$-norm, given by
	\begin{align*}
		\norm{x}_{1,w}= \sum_{i=1}^d w_i \abs{x(i)} 
	\end{align*}
	\big($\norm{\cdot}_{1,w}$ should of course not be confused with the weak $\ell_1$-norm.\big) We can likewise define the reweighted $\ell_p$-norms for every $p \in [1, \infty]$;
	\begin{align*}
		\norm{x}_{p, w} = \left(\sum_{i=1}^d w_i \abs{x(i)}^p\right)^\frac{1}{p} \ , \ p<\infty \text{ and } 
		\norm{x}_{\infty, w} = \sup_{i}  w_i \abs{x(i)}.
	\end{align*}
	
	Choosing the weights $w_i$ is a subtle process -- but the general idea is that the weights on the indices $S$ should be lower. This will have the effect that high values $x(i)$ on $S$ are not penalized, which is desirable if $T \approx S$.  Many papers have been concerned with the choice $w= \1_{S^c}$ (the setting corresponding to this choice is sometimes called \emph{modified Compressed Sensing}  \cite{VaswaniLu2009}), but the authors of \cite{FriedMansSaabYil2012} suggest that one should give oneself the freedom to choose $w$ as $\omega \1_S + \1_{S^c}$ for some $\omega \in [0,1]$. Here, as well as in the following, $\1_{M}$ refers to the indicator function of a set $M$, i.e. $$ \1_M(i) = \begin{cases} 1 \ \text{ if } i \in M \\ 0 \ \text{ else. }\end{cases}$$
	
 Tedious arguments by the authors of \cite{FriedMansSaabYil2012} showed that provided that the guess is good ($\alpha>.5$), the required bound on the RIP-constant for the matrix $A$ needed for robust recovery can be softened, and the stability constants get smaller, if one chooses $\omega =0$. The formulas actually suggest that one either should use $\omega=0$ or $\omega=1$, and never something in between. Numerical experiments do not harmonize with this rule of thumb: in fact, if the guess is bad ($\alpha<.5$), one does significantly better choosing $\omega$ between $0$ and $1$ than choosing one of the extreme values. The authors left it as an open problem to analyze how to choose the weight in an optimal way.

 \subsection{Previous results on the optimal choice of weights.}
 It is of course not entirely clear what ``optimal`` means in this context. One way of defining it is to say that a weight is optimal if the minimal amount of measurements needed in order for $(P_1^w)$ with $b=Ax_0$ to recover $x_0$ is as small as possible. A popular model assumption is that $A$ is chosen according to some probability distribution.  Because of its universality as a limit distribution, the Gaussian probability distribution is probably the most canonical one. In this setting, we search for the minimal amount of measurements required for $P_1^w$ to succeed with high probability (the exact meaning of ``high`` varies from application to application).
 
 The first result concerning choosing $\omega$ optimally in this sense was probably Theorem 4.3.3 of the PhD-thesis \cite{Xu2010} (the result was also presented in \cite{XuetAl2009}). Based on a very sophisticated argument relying on calculating internal and external angles of certain polytopes, the authors implicitly provide a threshold $m_0(\omega)$ so that if one uses $m \geq m_0(\omega)$ Gaussian measurements, $(P_1^w)$ will succeed with high probability. One can then minimize $m_0(\omega)$ with respect to the weight $\omega$. Since the formulas are very complicated, this is however a tough task.
 
 A result which is simpler to grasp is given in \cite{OymakHassibiEtAl2012}. The strategy is basically the same. One computes a threshold depending on the weight $\omega$, using the famous ``escape through a mesh lemma`` \cite{Gordon1988} and then minimizes the threshold with respect to $\omega$. Although the paper provides powerful and beautiful results on how the optimal threshold is depending on $\alpha$ and $\rho$, it does not  directly provide any method of actually calculating the optimal weights. Furthermore, as the authors point out, the result is only about an upper bound on how many measurements one needs. 
 
	A different analysis conducted in \cite{MansourSaab2014}. The authors define a \emph{weighted Null Space Property} and prove several interesting result about that property. In particular, they calculate upper bounds on thresholds on the number of measurements needed to secure that Gaussian matrices have that property. The main technical tool is again Gordon's escape through a mesh lemma. As a concrete weighting strategy, it is proposed to set the weight $\omega=1-\alpha$, since that weight minimizes their bound on the thresholds. One should note that the weighted Null Space property is uniform in nature in the sense that it is equivalent to that all signals supported on a set $S$ will be recovered from its linear measurements through weighted $\ell_1$-minimization as long as the quality of the support estimate is good enough - its exact position does not matter.

 \subsection{Contributions of this paper}
 
 In this paper, we will consider the full model with $k$ subsets $S_i$ of the complete index set $[1,\dots,d]$, each one with a corresponding probability $\alpha_i$. Correspondingly, we will allow for $k$ weights $\omega_i$, one on each $S_i$. We will use the short-hand notation $\omega=(\omega_1, \dots, \omega_k)$. The strategy of finding the optimal weights is very similar to the ones of the papers mentioned above -- we will calculate a threshold $m_0$ depending on $\omega$ and then prove that $m_0(\omega)$ is minimized for an optimal set of weights. To be precise, we will in fact not minimize $m_0$ itself, but only an upper and lower bound on $m_0$. These bounds are however simultaneously minimized for the same $\omega^*$ and also lie very close to each other. We will also investigate how the mentioned bounds on $m_0$ depend on the properties of the sets $S_i$, $i=1, \dots, k$. In particular, we will find the perhaps somewhat remarkable fact that one needs in principle as many measurements to recover an $(\alpha_1 \abs{S_1} +\dots + \alpha_k \abs{S_k})$-sparse signal given the prior knowledge of the probabilities $\alpha=(\alpha_1, \dots, \alpha_k)$ and the sets $(S_1, \dots S_k)$ as recovering $k$ signals $x_i$ separately, where for each $i$, $x_i$ is $\alpha_i\abs{S_i}$-sparse and known to be supported on the set $S_i$. This was already proven for upper bounds on the measurements needed in \cite{OymakHassibiEtAl2012}, but here, we provide also a statement about lower bounds.
 
 In order to do this, we will use a powerful and recent result from \cite{AmelunxLotzMcCoyTropp2014}, namely that the recovery probability of a convex program undergoes a \emph{phase transition} as the number of measurements surpass the \emph{statistical dimension} $\delta(C)$ of a certain convex cone. We will provide a way of calculating a set of weights $\omega^*$ which simultaneously minimizes a tight \emph{lower and upper} bound of $\delta(C)$, and hence to some extent settle the problem regarding the optimal choice of weights when recovering a sparse vector $x_0$ with prior information. Additionally, we will even provide a simple analytical formula for $\omega^*$. 
 
 The fact that we are using the new notion of statistical dimensions is the main technical difference from the prior work mentioned in the previous section, where the escape through a lemma was the main tool in most cases. \newline
 
 The notation in the paper will be fairly standard. There are however a few things that should be pointed out. For a vector $w \in \R^d$, $\diag(w)$ is the diagonal matrix in $\R^{d,d}$ whose diagonal is $w$. $\pos(x)$ is the positive part of a real number, i.e. $\pos(x)=x$ if $x \geq 0$, and otherwise $\pos(x)=0$. For $x \in \R^d$, $\sgn \ x$ denotes the vector consisting of the signs of the entries of $x$, with the convention $\sgn \ 0 = 0$. Finally, $\abs{S}$ will have different meanings depending on its argument. If $S$  is a real number, $\abs{S}$ denotes its absolute value. For vectors, $\abs{S}$ is the vector formed by taking the absolute value pointwise. If $S$ is a set, $\abs{S}$ denotes the number of elements of $S$.
 
 \section{Statistical dimensions and the measurement threshold}
	
	Let us begin by fixing the model situation and notation. $x_0 \in \R^d$ is an $s$-sparse vector, meaning that only $s$ of the entries $x_0(i), i =1, \dots d$ are unequal to zero. Additional to the prior information that $x_0$ is sparse, we are given some partial information where the support $T$ of $x_0$ is situated, expressed by the partition $\mathcal{S}=(S_i)_{i=1}^k$ of $[1,\dots d]$. With each set $S_i$ we associate the two parameters $\alpha_i$ and $\rho_i$
	\begin{align*}
		 \rho_i = \frac{\abs{S_i}}{d} \text{ and } \alpha_i = \frac{\abs{T \cap S_i}}{\abs{S_i}}, 
	\end{align*}
	reflecting its size and the probability for a $j \in S_i$ to be an element in $T$, $i =1, \dots, d$, respectively. To each $S_i$, we will assign a weight $\omega_i \geq 0$ and always choose the weights according to $$w = \sum_{i=1}^k \omega_i \1_{S_i}.$$ We will without loss of generality assume that $\alpha_i>0$ for every $i$ (if $\alpha_i=0$ for some $i$, the corresponding set of indices $S_i$ can be completely ignored in the recovery process.)
	
	Now we define the object which we will search for in the rest of this section: the threshold amounts of measurements needed for the program $(P_1^w)$ to be successful. 
	
	\begin{defi} \label{def:Thresholds}
		Let $A\in \R^{m,d}$ be a Gaussian matrix. Then $\mu_{s,d}$ denotes the normalized threshold amount of measurement needed for $(P_1)$ with $Ax_0=b$ to succeed at recovering an $s$-sparse vector $x_0$, i.e.,  $(P_1)$ will succeed with high probability if $m \geq d(\mu_{d,s} + o(1))$, and will fail with high probability if $m \leq d(\mu_{d,s} - o(1))$. 
		
		In the exact same way, we define for a given support $T$ and partition $\mathcal{S}=(S_i)_{i=1}^k$ the number $m_{\calS,T,w}$ as the threshold amount of measurements for $(P_1^w)$ with $Ax_0=b$ to succeed at recovering a vector $x_0$ supported on $T$. Finally, $m_{\calS,T}$ is the value obtained by choosing the weights $w$ optimally, i.e. $m_{\calS,T}=\inf_{w\geq 0} m_{\calS,T,w}$.
	\end{defi}
	
	Note that we are only concerned with exact recovery of exactly sparse signals, and, in particular, leave out cases where noise is involved and the signals are only approximately sparse. We believe that the requirement of stability and robustness will not significantly increase the measurement threshold, but also think that a thorough analysis of these questions are beyond the scope of this paper. Therefore they are left as possible lines of future research.

 The authors of  \cite{OymakHassibiEtAl2012}  provided upper bounds $\hat{\eta}_{s,d}$  and $\hat{\eta}_{\mathcal{S},T}$ of $\mu_{s,d}$ and $m_{\calS,T}$, respectively. (Note that our notation is a bit different to theirs: what we call $\hat{\eta}_{\mathcal{S},T}$ is denoted $\hat{\eta}(\alpha,\beta)$ by them, where $\alpha$ and $\beta$ denotes $\abs{S_1 \cap T}$ and $\abs{S_2\cap T}$, respectively.)
 They then proceed to prove the following very beautiful result.
 \begin{theo} \cite[Th. 3.1]{OymakHassibiEtAl2012} \label{th:SynthesisUpperThresholds} We have
 	\begin{align*}
 		\hat{\eta}_{(S_1, S_2),T} = \rho_1 \hat{\eta}_{\alpha_1\abs{S_1},\abs{S_1}} + \rho_2 \hat{\eta}_{\alpha_2\abs{S_2},\abs{S_2}}
 	\end{align*}
 \end{theo}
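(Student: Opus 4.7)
The plan is to unravel the formula that defines $\hat{\eta}_{\calS,T,w}$ (originating from Gordon's escape-through-a-mesh lemma applied to the descent cone of the weighted $\ell_1$-norm at $x_0$) and show that, when one additionally optimises over the weight vector $w=\omega_1 \ind_{S_1}+\omega_2 \ind_{S_2}$, the resulting optimisation problem separates into two independent ones, each of which is the optimisation problem whose value equals $|S_j|\,\hat{\eta}_{\alpha_j|S_j|,|S_j|}$.

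Concretely, first I would recall that the standard bound has the form
\begin{equation*}
d\cdot \hat{\eta}_{\calS,T,w} \;=\; \inf_{\tau\geq 0}\; \mathbb{E}\,\dist\!\big(g,\, \tau\,\partial \norm{x_0}_{1,w}\big)^2,
\end{equation*}
where $g$ is a standard Gaussian vector in $\R^d$. Since the subdifferential factorises coordinatewise, and since on $S_j$ the weight is constant equal to $\omega_j$, the expectation splits as
\begin{equation*}
\sum_{j=1}^{2}\Big(|S_j\cap T|\,(1+\tau^2\omega_j^2) \;+\; |S_j\setminus T|\,F(\tau\omega_j)\Big),\qquad F(t):=\mathbb{E}\,\pos(|g_1|-t)^2,
\end{equation*}
using that on $T\cap S_j$ the subgradient is forced to equal $\omega_j\sgn x_0(i)$, while on $S_j\setminus T$ the optimal subgradient is obtained by soft-thresholding $g_i$ at level $\tau\omega_j$.

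The second step is the key observation: the triple $(\tau,\omega_1,\omega_2)$ of free non-negative parameters may be reparametrised by setting $\tau_j:=\tau\omega_j\geq 0$, which renders the two summands entirely independent. Taking the infimum over $w\geq 0$ and over $\tau\geq 0$ jointly therefore yields
\begin{equation*}
d\cdot \hat{\eta}_{(S_1,S_2),T} \;=\; \sum_{j=1}^{2}\;\inf_{\tau_j\geq 0}\Big(|S_j\cap T|\,(1+\tau_j^2)+|S_j\setminus T|\,F(\tau_j)\Big).
\end{equation*}
The $j$-th summand is, by the very same formula applied to a standard compressed sensing problem on $\R^{|S_j|}$ with $\alpha_j|S_j|$-sparse signals, equal to $|S_j|\cdot \hat{\eta}_{\alpha_j|S_j|,|S_j|}$. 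Dividing through by $d$ and recalling $\rho_j=|S_j|/d$ gives the claim.

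The main technical point that needs care is the reparametrisation $\tau_j=\tau\omega_j$: one has to justify that the joint infimum over $(\tau,\omega_1,\omega_2)\in \R_{\geq 0}^3$ equals the separate infima over $(\tau_1,\tau_2)\in \R_{\geq 0}^2$. This holds because the map $(\tau,\omega_1,\omega_2)\mapsto (\tau\omega_1,\tau\omega_2)$ is surjective onto $\R_{\geq 0}^2$, so the two parametrisations yield the same range of values. Everything else — the explicit shape of $\hat{\eta}$, the decomposition of the subdifferential, and the identification of each summand with the single-set threshold — is a direct computation once the formula for $\hat{\eta}_{s,d}$ used in \cite{OymakHassibiEtAl2012} is written down.
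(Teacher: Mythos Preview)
Your proposal is correct. The paper does not itself prove this statement---it is quoted from \cite{OymakHassibiEtAl2012}---but the paper's own proof of the generalisation to $k$ sets (Theorem~\ref{th:SynthesisThresholds}) follows exactly the line you describe: compute $\erw{\dist^2(g,\tau\,\partial_{x_0}\norm{\cdot}_{1,w})}$ coordinatewise (Lemma~\ref{lem:distSubdiffL1Weight}), substitute $v_i=\tau\omega_i$ so that the expression decouples into $\sum_i \rho_i\big(\alpha_i(1+v_i^2)+(1-\alpha_i)\varphi(v_i)\big)$, and then recognise each summand as the quantity whose infimum defines $\mu_{\alpha_i|S_i|,|S_i|}$.
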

 
 Theorem \ref{th:SynthesisUpperThresholds} in fact states that in order to find an $\alpha_1 \rho_1 d + \alpha_2 \rho_2 d$-sparse vector using the prior information, we need about as many measurements as the sum of the measurements needed to separately recover two  $\alpha_i \rho_i d$-sparse vectors known to be supported on $S_i$, respectively. The authors of \cite{OymakHassibiEtAl2012} conjectured that the theorem can be generalized to more than $k=2$ sets $S_i$. We will do this at the end of the paper, with the addition that the upper bound we provide on $m_{\calS,T}$ in fact also induces a lower bound.

$\mu_{s,d}$ and $m_{\calS,T,w}$ can be calculated using a few results from \cite{AmelunxLotzMcCoyTropp2014}. Before stating the results we will use, let us define the statistical dimension of a cone $C$.
 
\begin{defi} \label{def:statDim} \cite{AmelunxLotzMcCoyTropp2014} Let $C \sse \R^d$ be a closed convex cone and $g$ be a Gaussian vector. Let furthermore $\Pi_C$ denote the metric projection onto $C$, i.e.
\begin{align*}
	\Pi_C(x) = \argmin_{c \in C} \norm{ c-x}_2.
\end{align*}
The \emph{statistical dimension} $\delta(C)$ of $C$ is then defined as
\begin{align*}
	\delta(C) = \erw{\norm{\Pi_C g}_2^2}.
\end{align*}
The statistical dimension of a general convex cone is the statistical dimension of its closure.
\end{defi} 

The main result of \cite{AmelunxLotzMcCoyTropp2014} shows that the statistical dimension of the descent cone $\mathcal{D}(f,x_0)$ of a convex function $f$ at $x_0$, i.e., 
\begin{align*}
	\mathcal{D}(f,x_0)= \set{v \in \R^d \vert \ \exists \ \tau >0 : f(x_0+\tau v) \leq f(x_0)},
\end{align*} 
can be used to calculate the minimal amount of measurements needed for the program
\begin{align}
	\min f(x) \text{ subject to } Ax =b  \tag{$P_f$}
\end{align}
with $b=Ax_0$ to succeed at recovering a vector $x_0 \in \R^d$.  It is well known that $(P_f)$ has $x_0$ as its solution if and only if $\mathcal{D}(f,x_0) \cap \ker A  = \set{0}$. This leads to the following precise result:
\begin{theo} \cite[Th. I]{AmelunxLotzMcCoyTropp2014} \label{th:statDim} Let $C \sse \R^d$  be a convex cone and $A\in \R^{m,d}$ a Gaussian matrix. Then if
	\begin{align*} 
		\delta(C) + \sqrt{8 \log(4/\eta)}\sqrt{d} \leq  m, 
		\end{align*}
		we have $\prb{ C \cap \ker A  \neq \set{0}} \leq \eta$. Likewise if
		\begin{align*}
			\delta(C) - \sqrt{8 \log(4/\eta)}\sqrt{d}  \geq  m,
			\end{align*}
			then $ \prb{ C \cap \ker A  \neq \set{0}} \geq 1-\eta$.
			
			In particular, the probability that the solution of the problem $(P_f)$ is equal to $x_0$ undergoes a phase transition as $m$ surpasses $\delta(\mathcal{D}(f,x_0))$, and hence we have
			\begin{align*}
				\mu_{s,d}= \frac{\delta\left(\mathcal{D}(\norm{\cdot}_1,x_0)\right)}{d} \text{ and } m_{\calS,T,w} = \frac{\delta\left(\mathcal{D}(\norm{\cdot}_{1,w},x_0)\right)}{d} .
			\end{align*}
\end{theo}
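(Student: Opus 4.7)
The plan is to reduce the event $C\cap \ker A \neq \set{0}$ to an intersection problem with a uniformly random subspace, apply a conic kinematic formula from spherical integral geometry, and finally exploit concentration of the conic intrinsic volumes around their mean $\delta(C)$. The very last claim about $\mu_{s,d}$ and $m_{\calS,T,w}$ will then be immediate from the descent-cone criterion $\mathcal{D}(f,x_0)\cap \ker A=\set{0}$ for $(P_f)$ to recover $x_0$, already noted in the excerpt.

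First, I would use the rotational invariance of the Gaussian distribution: the kernel of a Gaussian matrix with $m$ rows is uniformly distributed on the Grassmannian of $(d-m)$-dimensional subspaces of $\R^d$. Thus $\prb{C \cap \ker A \neq \set{0}}$ equals the probability that a uniformly random subspace of codimension $m$ meets $C$ non-trivially. Second, I would invoke the conic kinematic formula, which, up to universal constants, sandwiches this probability between ``half-tails'' of the sequence of conic intrinsic volumes $v_0(C), \dots, v_d(C)$. These numbers are non-negative and sum to one, so if $V$ is the random variable with distribution $(v_k(C))_k$, the probability under study behaves like $\prb{V\geq m}$ or $\prb{V\leq m}$. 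The third ingredient is the identity $\delta(C) = \sum_{k=0}^d k\, v_k(C)$, i.e., $\delta(C)$ is exactly the mean of $V$. This identity can be extracted from a master Steiner formula applied to a quadratic weight, and matches the $L^2$-definition in Definition \ref{def:statDim}.

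The heart of the proof, and the step I expect to be the main obstacle, is a sub-Gaussian tail bound of the form
\begin{align*}
	\prb{\abs{V-\delta(C)}\geq t} \leq 4\exp\left(-\frac{t^2}{8d}\right).
\end{align*}
To obtain this, I would use the master Steiner formula once more to express the moment generating function of $V$ in terms of a Laplace transform of $\norm{\Pi_C g}_2^2$, then combine this with Gaussian Lipschitz concentration applied to the $1$-Lipschitz map $g \mapsto \norm{\Pi_C g}_2$. A standard Bernstein-type argument, together with the bound $\delta(C)\leq d$, then yields the displayed inequality. The subtle part here is that a naive Bernstein bound applied to $\norm{\Pi_C g}_2^2$ has variance proxy proportional to $\delta(C)$ rather than $d$; controlling the extra slack to land on a uniform width $\sqrt{d}$ (independent of $C$) requires care.

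Plugging $t=\sqrt{8\log(4/\eta)\,d}$ into the tail bound and combining with the kinematic sandwich gives $\prb{C\cap \ker A \neq \set{0}}\leq \eta$ when $m\geq \delta(C)+t$ and $\prb{C\cap \ker A \neq \set{0}}\geq 1-\eta$ when $m\leq \delta(C)-t$, which are precisely the two inequalities of the theorem. Finally, setting $f=\norm{\cdot}_1$ and $f=\norm{\cdot}_{1,w}$ and letting $d\to\infty$ (so that the additive slack $\sqrt{8\log(4/\eta)d}$ becomes $o(d)$) yields the normalized phase transitions $\mu_{s,d}=\delta(\mathcal{D}(\norm{\cdot}_1,x_0))/d$ and $m_{\calS,T,w}=\delta(\mathcal{D}(\norm{\cdot}_{1,w},x_0))/d$.
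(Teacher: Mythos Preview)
The paper does not prove this theorem at all: it is quoted verbatim as \cite[Th.~I]{AmelunxLotzMcCoyTropp2014} and used as a black box, so there is no ``paper's own proof'' to compare against. Your sketch is, however, a faithful outline of the argument in \cite{AmelunxLotzMcCoyTropp2014}: rotational invariance reduces to a random subspace, the Crofton/kinematic formula expresses the intersection probability via half-tails of the intrinsic-volume distribution, $\delta(C)$ is identified as the mean of that distribution, and Gaussian concentration (via the master Steiner formula and the Lipschitz property of $g\mapsto\norm{\Pi_C g}_2$) yields the sub-Gaussian tail with width $\sqrt{d}$; the final identification of $\mu_{s,d}$ and $m_{\calS,T,w}$ follows from the descent-cone criterion and Definition~\ref{def:Thresholds}. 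So your proposal is correct and matches the source the paper cites, even though the present paper itself supplies no argument.
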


 The authors of \cite{AmelunxLotzMcCoyTropp2014} proceeded to calculate $\delta\left(\mathcal{D}(\norm{\cdot}_{1},x_0)\right)$ as an exemplary application of a technique they presented. We will use the same technique to calculate $\delta\big(\mathcal{D}(\norm{\cdot}_{1,w},x_0)\big)$, but before that, let us state the result regarding standard Basis Pursuit.
 
 \begin{theo} \cite[Prop. 4.5]{AmelunxLotzMcCoyTropp2014}. Define the function $J_\sigma$ through
	\begin{align*}
		J_\sigma(\tau)= \sigma (1 + \tau^2) + (1-\sigma) \varphi(\tau),
\end{align*}	 
 where $\varphi: [0,1] \to [0,1]$ is defined through
 \begin{align} \label{eq:varphifunc}
  \varphi(t) = \sqrt{\frac{2}{\pi}}\int_{t}^\infty (x-t)^2 \exp(-x^2/2)dx.
\end{align}  
The statistical dimension of the descent cone of the $\ell_1$-norm at an $s$-sparse $x_0 \in \R^d$ satisfies
\begin{align*}
 \inf_{\tau>0} J_{\sigma}(\tau) - \frac{2}{d}\sqrt{\frac{1}{\sigma}} \leq \frac{\delta\left(\mathcal{D}( \norm{\cdot}_{1}, x_0)\right)}{d} \leq \inf_{\tau>0} J_{\sigma}(\tau),
 \end{align*}
 where $\sigma =s/d$.
 \end{theo}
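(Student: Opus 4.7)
The plan is to invoke the polarity-based identity from \cite{AmelunxLotzMcCoyTropp2014} that expresses the statistical dimension of a descent cone as the expected squared distance from a Gaussian vector to the corresponding normal cone. Concretely, for any proper convex $f$ and any $x_0$,
\begin{align*}
\delta(\mathcal{D}(f, x_0)) = \erw{\dist(g, N_f(x_0))^2} = \erw{\inf_{\tau \geq 0} \dist(g, \tau \partial f(x_0))^2},
\end{align*}
where $g \sim N(0,I_d)$ and the second equality uses the representation of the normal cone as $N_f(x_0) = \bigcup_{\tau \geq 0} \tau \partial f(x_0)$. Exchanging expectation and infimum immediately gives the upper bound $\delta(\mathcal{D}(f, x_0)) \leq \inf_{\tau \geq 0} \erw{\dist(g, \tau \partial f(x_0))^2}$; the lower bound must then account for the cost of this swap.

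Next, I would evaluate the expectation explicitly. At an $s$-sparse $x_0$ with support $T$,
\begin{align*}
\partial \norm{\cdot}_1(x_0) = \set{v \in \R^d : v_i = \sgn(x_0(i)) \text{ for } i \in T,\ \abs{v_i} \leq 1 \text{ for } i \notin T},
\end{align*}
so the projection onto $\tau \partial \norm{\cdot}_1(x_0)$ decouples coordinate-wise. On the support one pays $(g_i - \tau \sgn(x_0(i)))^2$, whose expectation is $1+\tau^2$; off the support one pays the soft-thresholding residual $(\abs{g_i} - \tau)_+^2$, whose expectation is exactly $\varphi(\tau)$ by integration against the half-normal density. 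Summing the $s$ on-support and $d-s$ off-support contributions, normalizing by $d$, and setting $\sigma = s/d$ gives $\erw{\dist(g, \tau \partial \norm{\cdot}_1(x_0))^2}/d = J_\sigma(\tau)$, so $\delta(\mathcal{D}(\norm{\cdot}_1, x_0))/d \leq \inf_{\tau \geq 0} J_\sigma(\tau)$.

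For the matching lower bound, I would invoke the general inequality from \cite{AmelunxLotzMcCoyTropp2014} that bounds the gap between $\erw{\inf_\tau \dist(g, \tau \partial f(x_0))^2}$ and $\inf_\tau \erw{\dist(g, \tau \partial f(x_0))^2}$. The idea is to pick the deterministic minimizer $\hat\tau$ of the expectation, use that $\hat\tau$ is a first-order critical point so the linear-in-deviation term vanishes, and control the residual via the Gaussian concentration of the $1$-Lipschitz function $g \mapsto \dist(g, \hat\tau \partial \norm{\cdot}_1(x_0))$. Tracking the dependence of the variance on the size $s$ of the support, the slack comes out as $2\sqrt{d/s}$, which after dividing by $d$ equals the claimed $\frac{2}{d}\sqrt{1/\sigma}$.

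The main obstacle is this last step. A crude application of the ALMT slack estimate would only yield a loss proportional to $\sup_{v \in \partial \norm{\cdot}_1(x_0)} \norm{v}_2 \leq \sqrt{d}$, which is an order of magnitude too large. The improvement to $\sqrt{d/s}$ requires a finer variance analysis isolating the on-support contribution, and is the only genuinely technical step of the proof.
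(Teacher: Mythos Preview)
Your upper-bound argument is correct and matches the paper: compute $\erw{\dist^2(g,\tau\,\partial_{x_0}\norm{\cdot}_1)}$ coordinate-wise to obtain $d\,J_\sigma(\tau)$, then swap infimum and expectation.

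For the lower bound, however, you have misidentified the mechanism. The paper (following ALMT; see the proof of the weighted generalization, Proposition~\ref{cor:statDimWeightBound}) does not perform a ``finer variance analysis isolating the on-support contribution.'' It simply applies the ready-made error estimate of Theorem~\ref{th:statDimBound}:
\[
0 \;\leq\; \inf_{\tau>0}\erw{\dist^2(g,\tau\,\partial_{x_0}\norm{\cdot})} - \delta\bigl(\mathcal{D}(\norm{\cdot},x_0)\bigr) \;\leq\; \frac{2\sup_{p\in\partial_{x_0}\norm{\cdot}}\norm{p}_2}{\bigl\|\,x_0/\norm{x_0}_2\,\bigr\|},
\]
where the denominator is $\norm{x_0}_1/\norm{x_0}_2$. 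The numerator is indeed at most $\sqrt{d}$, but the denominator is what produces the extra $\sqrt{s}$. The key observation---which you are missing---is that the left-hand side depends on $x_0$ only through its sign pattern (both $\mathcal{D}(\norm{\cdot}_1,x_0)$ and $\partial_{x_0}\norm{\cdot}_1$ are sign-pattern invariants), so one may replace $x_0$ by any vector with the same support and signs. Choosing $\abs{x_0}$ constant on $T$ makes Cauchy--Schwarz tight and yields $\norm{x_0}_1/\norm{x_0}_2=\sqrt{s}$, whence the slack is $2\sqrt{d}/\sqrt{s}$, i.e.\ $\tfrac{2}{d}\sqrt{1/\sigma}$ after dividing by $d$.

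So the ``crude application of the ALMT slack estimate'' is not too large at all; it already gives the correct answer once one exploits the freedom in the choice of representative $x_0$. Your proposed alternative via Gaussian concentration of $g\mapsto\dist(g,\hat\tau\,\partial_{x_0}\norm{\cdot}_1)$ and a separate on-support variance bound is not the argument used, and as you acknowledge, you have not actually executed it.
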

 
 Now as promised, we will prove a similar result regarding the weighted Basis Pursuit $(P_{1,w})$. We will use the same technique as the authors of \cite{AmelunxLotzMcCoyTropp2014}  and therefore start by calculating the expected length of the projection of a Gaussian vector onto a dilation of the subdifferential of the weighted $\ell_1$-norm. The \emph{subdifferential} \cite[p.214-15]{Rockafellar1970} of a convex function $f$ at a point $x_0$ is the set of vectors $p$ satisfying 
 \begin{align*}
 	 \quad f(x + v) \geq f(x_0) + \sprod{p,v} \text{ for all } v \in \R^d.
\end{align*}  
For this, we will need a fact on subdifferentials of norms. The following result is folklore in this area, but let us include a proof for completeness.
 
 \begin{prop} For any norm $\norm{\cdot}$ on $\R^d$, denote by $\norm{\cdot}_{*}$ its dual norm, i.e.
 \begin{align*}
 	\norm{p}_{*} = \sup_{\norm{v}=1} \sprod{p,v}.
\end{align*} 
 Then for every $x_0\neq 0$
	\begin{align*}
		\partial_{x_0}\norm{\cdot} = \set{ p \ \vert \ \norm{p}_* =1, \ \sprod{p, x_0} = \norm{x_0}},
	\end{align*}
	where $\sprod{p,x_0}$ denotes the canonical dual pairing $\R^d \times \R^d$, i.e., the Euclidean scalar product.
	\end{prop}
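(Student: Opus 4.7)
The plan is a straightforward double-inclusion argument, using only the definitions of the subdifferential and the dual norm together with the elementary inequality $\langle p, v \rangle \le \|p\|_* \|v\|$.

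For the inclusion $\supseteq$, I would start with a $p$ satisfying $\|p\|_* = 1$ and $\langle p, x_0 \rangle = \|x_0\|$. Then for every $v \in \mathbb{R}^d$ the duality inequality and linearity give
\begin{align*}
  \|x_0 + v\| \; \ge \; \langle p, x_0 + v \rangle \; = \; \langle p, x_0 \rangle + \langle p, v \rangle \; = \; \|x_0\| + \langle p, v \rangle,
\end{align*}
which is precisely the defining property of a subgradient of $\|\cdot\|$ at $x_0$.

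For the converse inclusion $\subseteq$, take any $p \in \partial_{x_0}\|\cdot\|$, i.e.\ $\|x_0 + v\| \ge \|x_0\| + \langle p, v \rangle$ for all $v$. First I would plug in $v = x_0$ and $v = -x_0$ to obtain $\langle p, x_0 \rangle \le \|x_0\|$ and $\langle p, x_0 \rangle \ge \|x_0\|$, respectively, which yields $\langle p, x_0 \rangle = \|x_0\|$. To show $\|p\|_* = 1$, I would fix any $v$ with $\|v\| = 1$ and any $t > 0$, and use the subgradient inequality at $tv$ combined with the triangle inequality:
\begin{align*}
  t \langle p, v \rangle \; \le \; \|x_0 + tv\| - \|x_0\| \; \le \; \|tv\| \; = \; t.
\end{align*}
Dividing by $t$ and taking the supremum over $v$ with $\|v\| = 1$ gives $\|p\|_* \le 1$. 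The reverse inequality is immediate from $\|x_0\| = \langle p, x_0 \rangle \le \|p\|_* \cdot \|x_0\|$ together with $x_0 \ne 0$, which forces $\|p\|_* \ge 1$.

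There is no real obstacle here; the statement is essentially a rewriting of the characterization of extreme dual pairs. The only small care required is the use of $x_0 \ne 0$ in the last step (to divide by $\|x_0\|$) and the fact that the two test vectors $\pm x_0$ are enough to pin down $\langle p, x_0 \rangle$ exactly because the subgradient inequality must hold in both directions simultaneously.
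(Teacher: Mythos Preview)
Your proof is correct and follows essentially the same route as the paper: the $\supseteq$ direction is identical, and for $\subseteq$ both arguments test the subgradient inequality at $v=-x_0$ and at unit vectors to pin down $\langle p,x_0\rangle=\|x_0\|$ and $\|p\|_*=1$, with only cosmetic differences in the order of steps and your (harmless but unnecessary) introduction of the scaling parameter $t$.
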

	\begin{proof}
		We aim to characterize the vectors $p \in \R^d$ which have the property
		\begin{align} \label{eq:normSubDiff}
			 \norm{x_0 +v} \geq \norm{x_0} + \sprod{p, v} \text{ for all } v \in \R^d
		\end{align}
		By taking the supremum in \eqref{eq:normSubDiff} over all $v$ with $\norm{v}=1$, we immediately obtain
		\begin{align*}
			\sup_{\norm{v}=1} \sprod{p, v} \leq \sup_{\norm{v}=1} \norm{x_0 +v} - \norm{x_0} \leq \sup_{\norm{v}=1} \norm{v} =1,
		\end{align*}
		i.e. $\norm{p}_* \leq 1$. By further choosing $v= -x_0$, we get $\sprod{p, x_0} \geq \norm{x_0}$, which proves $\norm{p}_*\geq 1$, and therefore also $\norm{p}_*=1$. This further implies that $\sprod{p, x_0} = \norm{x_0}$. If on the other hand $\norm{p}_*=1$ and $\sprod{p, x_0} = \norm{x_0}$, we have
		\begin{align*}
			\norm{x_0} + \sprod{p, v} = \sprod{p, x_0 + v} \leq \norm{x_0+v},
\end{align*}	
and hence 	 \eqref{eq:normSubDiff} is true, and we are done.
	\end{proof}
 
 Now we can calculate  $\erw{\dist^2(g, \tau \cdot \partial_{x_0} \norm{\cdot}_{1,w})} $ for $x_0 \in \R^d$ and a Gaussian vector $g$.
 
 \begin{lem} \label{lem:distSubdiffL1Weight}
 	Let $x_0 \in \R^d$ be arbitrary and $g$ a Gaussian vector. We have
 	\begin{align*}
 	  \erw{d^{-1}\dist^2(g, \tau \cdot \partial_{x_0} \norm{\cdot}_{1,w})} &= \sigma + \sum_{i=1}^k \rho_i(\alpha_i (\omega_i \tau)^2 
 	  + (1-\alpha_i) \varphi(\omega_i \tau)),
	\end{align*}
	where $\sigma = \norm{x_0}_0/d$, $w = \sum_{i=1}^k \omega_i \1_{S_i}$ and $\varphi$ is defined through Equation \eqref{eq:varphifunc}.
 \end{lem}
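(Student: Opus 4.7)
The plan is to exploit the fact that the subdifferential of $\norm{\cdot}_{1,w}$ at $x_0$ admits an explicit coordinatewise description, so that the distance of a Gaussian vector to its $\tau$-dilation decouples across coordinates; then take the expectation term by term.

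First I would apply the preceding proposition to identify $\partial_{x_0}\norm{\cdot}_{1,w}$. The norm dual to $\norm{\cdot}_{1,w}$ (for $w_i>0$) is the weighted $\ell_\infty$-norm $\norm{p}_* = \max_i |p(i)|/w_i$. Combining $\norm{p}_*=1$ with $\sprod{p,x_0}=\norm{x_0}_{1,w}$ forces $p(i) = w_i\sgn(x_0(i))$ for every $i\in T$, while on $T^c$ the only constraint is $|p(i)| \leq w_i$. Hence
\begin{align*}
\tau\cdot \partial_{x_0}\norm{\cdot}_{1,w} = \prod_{i\in T}\{\tau w_i\sgn(x_0(i))\} \times \prod_{i\in T^c}[-\tau w_i,\tau w_i].
\end{align*}

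Since this set is a Cartesian product of one-dimensional sets, for a Gaussian $g\in\R^d$ the squared distance decouples as
\begin{align*}
\dist^2(g,\tau\cdot\partial_{x_0}\norm{\cdot}_{1,w}) = \sum_{i\in T}\bigl(g(i)-\tau w_i\sgn(x_0(i))\bigr)^2 + \sum_{i\in T^c}\bigl(|g(i)|-\tau w_i\bigr)_+^{\,2}.
\end{align*}
Because every $g(i)$ is a standard Gaussian, the expectation of the $i\in T$ term equals $1+\tau^2 w_i^2$, while for $i\in T^c$ a change of variables gives
\begin{align*}
\erw{(|g(i)|-\tau w_i)_+^{\,2}} = \sqrt{\tfrac{2}{\pi}}\int_{\tau w_i}^\infty (x-\tau w_i)^2 e^{-x^2/2}\,dx = \varphi(\tau w_i),
\end{align*}
which is exactly the function defined in \eqref{eq:varphifunc}.

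Finally I would regroup the sum according to the partition $\calS$. Using $w_i = \omega_\ell$ for $i\in S_\ell$, and that $|S_\ell\cap T|=\alpha_\ell|S_\ell|$ and $|S_\ell\setminus T|=(1-\alpha_\ell)|S_\ell|$, dividing by $d$ yields
\begin{align*}
d^{-1}\erw{\dist^2(g,\tau\cdot\partial_{x_0}\norm{\cdot}_{1,w})} = \sum_{\ell=1}^k \rho_\ell\alpha_\ell + \sum_{\ell=1}^k \rho_\ell\bigl(\alpha_\ell(\omega_\ell\tau)^2 + (1-\alpha_\ell)\varphi(\omega_\ell\tau)\bigr).
\end{align*}
The identity $\sum_\ell \rho_\ell\alpha_\ell = \sum_\ell |S_\ell\cap T|/d = |T|/d = \sigma$ (using that $\calS$ is a partition of $[1,\dots,d]$) reduces the first sum to $\sigma$, giving the claimed formula. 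No step is genuinely hard here; the only subtlety is identifying the dual norm correctly and being careful that the product structure of the subdifferential makes the one-dimensional integrals independent, so that the expectation splits as a sum.
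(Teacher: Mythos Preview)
Your proof is correct and follows essentially the same route as the paper: identify the dual norm as the weighted $\ell_\infty$-norm, read off the coordinatewise product structure of $\partial_{x_0}\norm{\cdot}_{1,w}$, split the squared distance accordingly, compute the two types of one-dimensional Gaussian expectations, and regroup over the partition $\calS$. The only cosmetic difference is that the paper writes the subdifferential in vector form $\{D^w\sgn x_0 + v : \supp v \cap T = \emptyset,\ |v_j|\le w_j\}$ rather than as an explicit Cartesian product, and collects the constant term as $|T|=d\sigma$ up front rather than via $\sum_\ell \rho_\ell\alpha_\ell=\sigma$ at the end.
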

 
 \begin{proof}
 Let us start by noticing that the dual norm of $\norm{\cdot}_{1,w}$ is $\norm{\cdot}_{\infty,w^{-1}}$. The Hölder inequality implies that for every $x, v \in \R^d$ ,
 \begin{align*}
 	\sprod{x,v} = \sum_{j=1}^dw_j^{-1}x_jw_j v_j \leq \norm{(w_i^{-1}x_i)_{i=1}^d}_\infty \norm{(w_iv_i)_{i=1}^d}_1= \norm{x}_{\infty,w^{-1}}\norm{v}_{1,w},
\end{align*}  
i.e. $\norm{x}_{1,w}^* \leq \norm{x}_{\infty, w^{-1}}$. By choosing $v$ equal to the vector for which $v(i)= \sgn \ x(i)$ if $i$ is the index where $w(i)^{-1}x(i)$ is maximal, and else $v(i)=0$, we even see that equality holds.

This, together with the fact that under the assumption $\norm{p}_{w^{-1}, \infty}=1$ , $\sprod{x_0, p}=\norm{x_0}_{1,w}$ if and only if $p \vert_{\supp \ x_0} = D^w \sgn \ x_0 \vert_{\supp \ x_0}$, implies that 
 \begin{align*}
 	\partial_{x_0} \norm{ \cdot}_{1,w} = \set{ D^w \sgn \ x_0 + v \ \Big\vert \  \supp \ v \cap \supp \ x_0 = \emptyset,
 	 \forall j : \abs{v_j} \leq w_j },
 \end{align*}
 where $D^w = \diag(w) = \diag\left(\sum_{i=1}^k \omega_i \1_{S_i} \right)$. Abbreviating $T = \supp \ x_0$, we see that
 \begin{align*}
 	\dist^2(g, \tau \cdot \partial_{x_0} \norm{\cdot}_{1,w}) =& \sum_{j \in T} (g(j) -\sgn(x_0(j)) w_j\tau)^2 + \sum_{j \notin T} \pos(\abs{g(j)}- w_j\tau)^2
 \end{align*}
 (the part on $T$ of a vector in $\tau \partial_{x_0} \norm{\cdot}_{1,w}$ is fixed to $\tau D^w\sgn \ x_0$, and the other part can elementwise not be larger than $w$.) Taking the expectation, we obtain
 \begin{align*}
 	\mathbb{E}\big(\dist^2(g, \tau \cdot \partial_{x_0} \norm{\cdot}_{1,w})\big)  
 	=& \sum_{j\in T} (1 + w_j^2\tau^2) 
 	+ \sum_{j \notin T} \sqrt{\frac{2}{\pi}} \int_{w_j \tau}^\infty (x-w_j\tau)^2 \exp(-x^2/2)dx \\
 	 =& \abs{T}+  \sum_{i=1}^k\abs{S_i \cap T} \tau^2 \omega_i^2+ \abs{S_i \cap T^c}  \varphi(\omega_j\tau) \\
 	 =& d\left(\sigma + \sum_{i=1}^k \rho_i\left(\alpha_i (w_i \tau)^2  + (1-\alpha_i) \varphi(\omega_i \tau)\right)\right) .
 	 \end{align*}
 \end{proof}
 
For convinence, let us abbreviate
\[J_{\sigma, \alpha, \rho, \omega}(\tau)= \sigma + \sum_{i=1}^k \rho_i(\alpha_i (\omega_i \tau)^2 +
 	   (1-\alpha_i) \varphi(\omega_i \tau)).\]
 	Also define 
 	\begin{align} \label{eq:mtilde}	\tilde{m}_{\calS,T,w}:=\inf_{\tau>0}J_{\sigma, \alpha, \rho, \omega}(\tau),
 	\end{align} 
 	where $\alpha$ and $\rho$ are the parameters associated with $\calS$ and $T$, and $w=\sum_{i=1}^k \omega_i \1_{S_i}$.

 	 As the notation suggests, $\tilde{m}_{\calS,T,w}$  is not far away from $m_{\calS,T,w}$. We will prove this using Lemma \ref{lem:distSubdiffL1Weight} together with the following theorem from \cite{AmelunxLotzMcCoyTropp2014}. 
 	 
 	 \begin{theo}   \label{th:statDimBound}
 	 \cite[Th. 4.3]{AmelunxLotzMcCoyTropp2014}. Let $\norm{\cdot}$ be a norm on $\R^d$. The the statistical dimension of the descent cone of $\norm{\cdot}$ at a point $x_0 \in \R^d$ satisfies
 	 \begin{align}
 	0 \leq  \inf_{\tau>0} \erw{ \dist^2(g, \tau \partial_{x_0} \norm{\cdot})} - \delta(\mathcal{D}&(\norm{\cdot}, x_0))
 	\leq \frac{2 \sup_{s \in \partial_{x_0} \norm{\cdot}}{\norm{s}_2}}{\norm{\tfrac{x_0}{\norm{x_0}_2}}}.
 \end{align}
 \end{theo}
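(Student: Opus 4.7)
My plan is to prove the two inequalities separately, relying on the polar duality between the descent cone and the subdifferential, together with Moreau's decomposition. A standard fact from convex analysis states that the polar of the descent cone equals the conic hull of the subdifferential:
\[
\mathcal{D}(\norm{\cdot}, x_0)^\circ = \bigcup_{\tau \geq 0} \tau\, \partial_{x_0}\norm{\cdot}.
\]
Moreau's decomposition writes any vector as the orthogonal sum of its projections onto a closed convex cone and its polar; together with the previous identity this gives
\[
\delta(\mathcal{D}(\norm{\cdot}, x_0)) = \erw{\norm{g - \Pi_{\mathcal{D}^\circ}(g)}_2^2} = \erw{\inf_{\tau \geq 0} \dist^2(g, \tau\,\partial_{x_0}\norm{\cdot})}.
\]
Since $\tau\,\partial_{x_0}\norm{\cdot} \subseteq \mathcal{D}^\circ$ for every $\tau \geq 0$, we have pointwise $\dist^2(g, \tau\,\partial_{x_0}\norm{\cdot}) \geq \dist^2(g, \mathcal{D}^\circ)$; taking expectations and then infimum over $\tau$ yields the left inequality.

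For the right inequality, for each realization of $g$ write $\Pi_{\mathcal{D}^\circ}(g) = \tau^*(g)\,s^*(g)$ with $\tau^*(g) \geq 0$ and $s^*(g) \in \partial_{x_0}\norm{\cdot}$. Moreau orthogonality gives $\langle g - \tau^*(g)\,s^*(g), s^*(g)\rangle = 0$, so that for any fixed $\tau_0 \geq 0$,
\[
\dist^2(g, \tau_0 \,\partial_{x_0}\norm{\cdot}) \leq \norm{g - \tau_0\, s^*(g)}_2^2 = \dist^2(g, \mathcal{D}^\circ) + (\tau^*(g) - \tau_0)^2\,\norm{s^*(g)}_2^2.
\]
Taking expectations and bounding $\norm{s^*(g)}_2 \leq M := \sup_{s \in \partial_{x_0}\norm{\cdot}} \norm{s}_2$ reduces the matter to controlling $\erw{(\tau^*(g) - \tau_0)^2}$; the natural choice $\tau_0 := \erw{\tau^*(g)}$ leaves $M^2\,\mathrm{Var}(\tau^*(g))$ on the right.

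The earlier Proposition characterizes $\partial_{x_0}\norm{\cdot}$ as $\{p : \norm{p}_* = 1,\ \langle p, x_0\rangle = \norm{x_0}\}$. In particular every $s \in \partial_{x_0}\norm{\cdot}$ satisfies $\langle s, x_0\rangle = \norm{x_0}$, so
\[
\tau^*(g) = \frac{\sprod{\Pi_{\mathcal{D}^\circ}(g),\, x_0}}{\norm{x_0}},
\]
realizing $\tau^*$ as a Lipschitz function of the Gaussian vector $g$ with constant at most $\norm{x_0}_2/\norm{x_0} = 1/\norm{x_0/\norm{x_0}_2}$ (using that the metric projection onto a convex cone is $1$-Lipschitz and Cauchy--Schwarz). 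Gaussian concentration for Lipschitz functionals then controls $\mathrm{Var}(\tau^*(g))$ and, after substitution back, delivers the stated upper bound.

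The main obstacle is this last step: a direct application of the Gaussian Poincar\'e inequality yields $\mathrm{Var}(\tau^*(g)) \leq 1/\norm{x_0/\norm{x_0}_2}^2$, which combined with the $M^2$ factor gives a bound of order $M^2/\norm{x_0/\norm{x_0}_2}^2$ — the square of the linear form $2M/\norm{x_0/\norm{x_0}_2}$ claimed. Obtaining the sharper linear dependence requires exploiting correlation between $\norm{s^*(g)}_2$ and $|\tau^*(g) - \tau_0|$ rather than splitting them crudely, or invoking Gaussian concentration in an integrated (tail-integral) form in place of a single variance estimate, which is the technical core of the argument in \cite{AmelunxLotzMcCoyTropp2014}.
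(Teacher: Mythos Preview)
The paper does not prove this theorem; it is quoted verbatim from \cite[Th.~4.3]{AmelunxLotzMcCoyTropp2014} and used as a black box in the proof of Proposition~\ref{cor:statDimWeightBound}. There is therefore no ``paper's own proof'' to compare your argument against.

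Judged on its own, your lower bound is correct: the polar identity $\mathcal{D}^\circ = \overline{\bigcup_{\tau\ge 0}\tau\,\partial_{x_0}\norm{\cdot}}$ together with Moreau's decomposition gives $\delta(\mathcal{D}) = \erw{\inf_{\tau\ge 0}\dist^2(g,\tau\,\partial_{x_0}\norm{\cdot})}$, and exchanging the infimum and expectation yields the left inequality immediately. For the upper bound, your orthogonality expansion $\norm{g-\tau_0 s^*}_2^2 = \dist^2(g,\mathcal{D}^\circ) + (\tau^*-\tau_0)^2\norm{s^*}_2^2$ is valid whenever $\tau^*(g)>0$ (when $\tau^*(g)=0$ the cross term $-2\tau_0\sprod{g,s^*}$ need not vanish, but is nonpositive since $g\in\overline{\mathcal{D}}$ and $s^*\in\mathcal{D}^\circ$, so one should argue that case separately). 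The Lipschitz representation $\tau^*(g)=\sprod{\Pi_{\mathcal{D}^\circ}(g),x_0}/\norm{x_0}$ with constant $L=1/\norm{x_0/\norm{x_0}_2}$ is also correct.

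The genuine gap is exactly the one you flag: combining the variance bound with the crude estimate $\norm{s^*(g)}_2\le M$ gives an error term $M^2 L^2$, not the $2ML$ stated in the theorem. These two quantities are incomparable in general (yours is sharper when $ML<2$, weaker when $ML>2$), so what you have written does \emph{not} establish the inequality as stated. You correctly identify that the missing ingredient lives in \cite{AmelunxLotzMcCoyTropp2014}; their argument does not decouple $\norm{s^*}_2$ from $\abs{\tau^*-\tau_0}$ via Cauchy--Schwarz but instead works with the function $\tau\mapsto\dist(g,\tau\,\partial_{x_0}\norm{\cdot})$ directly and uses a first-order (rather than second-order) comparison together with Gaussian concentration. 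As presented, your proposal is an outline that reproduces the easy half and a weaker variant of the hard half, with the key step deferred to the cited reference.
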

 
  \begin{prop} \label{cor:statDimWeightBound}
 	The statistical dimension of $\mathcal{D}( \norm{\cdot}_{1,w}, x_0)$ satisfies
		\begin{align} \label{eq:statDimTight}
			\tilde{m}_{\calS,T,w} - \frac{2}{d}\sqrt{\frac{1}{\bar{\alpha}}} \leq \frac{\delta\left(\mathcal{D}( \norm{\cdot}_{1,w}, x_0)\right)}{d} \leq \tilde{m}_{\calS,T,w},
		\end{align}
		where $\bar{\alpha}= \min_i{\alpha_i}$ and $\tilde{m}_{\calS,T,w}$ is defined through \eqref{eq:mtilde}. Moreover, the following bound independent of $\alpha$ holds:
		\begin{align} \label{eq:statDimSloppy}
		\tilde{m}_{S,T,w} - \frac{2}{\sqrt{d}} \leq \frac{\delta\left(\mathcal{D}( \norm{\cdot}_{1,w}, x_0)\right)}{d} \leq \tilde{m}_{S,T,w}.
		\end{align}
 \end{prop}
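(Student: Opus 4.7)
The plan is to apply Theorem~\ref{th:statDimBound} directly to $f = \norm{\cdot}_{1,w}$. Lemma~\ref{lem:distSubdiffL1Weight} evaluates its LHS: after dividing by $d$ and taking the infimum over $\tau > 0$, the expected squared distance becomes exactly $\tilde{m}_{\calS,T,w}$. This immediately yields the upper bound $\delta(\mathcal{D}(\norm{\cdot}_{1,w}, x_0))/d \leq \tilde{m}_{\calS,T,w}$ in both displayed inequalities.

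The key observation for the lower bound is that the subdifferential description
$\partial_{x_0}\norm{\cdot}_{1,w} = \{D^w \sgn\,x_0 + v : \supp\,v \cap T = \emptyset,\ \abs{v_j} \leq w_j\}$
from the proof of Lemma~\ref{lem:distSubdiffL1Weight} depends on $x_0$ only through $T$ and $\sgn\,x_0|_T$, and in particular is invariant under any rescaling of the nonzero entries of $x_0$. Hence we may replace $x_0$ by the specific vector $\tilde{x}_0$ defined by $\tilde{x}_0(j) = w_j\,\sgn(x_0(j))$ for $j \in T$ and $\tilde{x}_0(j) = 0$ otherwise, without changing either the subdifferential, the descent cone $\mathcal{D}(\norm{\cdot}_{1,w}, \tilde{x}_0) = \mathcal{D}(\norm{\cdot}_{1,w}, x_0)$, or the LHS of Theorem~\ref{th:statDimBound}. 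The only quantity that is affected is $f(\tilde{x}_0/\norm{\tilde{x}_0}_2)$, which is precisely what we want to make as large as possible.

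For this $\tilde{x}_0$ one computes $\norm{\tilde{x}_0}_{1,w}/\norm{\tilde{x}_0}_2 = \sqrt{\sum_{j \in T} w_j^2} = \sqrt{\sum_i \omega_i^2 \alpha_i \abs{S_i}}$ (Cauchy--Schwarz is saturated because $\tilde{x}_0$ is proportional to $w$ on $T$), while maximizing $\norm{s}_2$ over $s \in \partial_{\tilde{x}_0}\norm{\cdot}_{1,w}$ by saturating $\abs{v_j} = w_j$ for every $j \notin T$ gives $\sup_{s}\norm{s}_2 = \sqrt{\sum_j w_j^2} = \sqrt{\sum_i \omega_i^2 \abs{S_i}}$. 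Substituting into Theorem~\ref{th:statDimBound} and dividing by $d$ yields
\[
\tilde{m}_{\calS,T,w} - \frac{\delta\!\left(\mathcal{D}(\norm{\cdot}_{1,w}, x_0)\right)}{d} \leq \frac{2}{d}\sqrt{\frac{\sum_i \omega_i^2 \abs{S_i}}{\sum_i \omega_i^2 \alpha_i \abs{S_i}}}.
\]
Inequality \eqref{eq:statDimTight} now follows from the trivial estimate $\sum_i \omega_i^2 \alpha_i \abs{S_i} \geq \bar{\alpha}\sum_i \omega_i^2 \abs{S_i}$. For \eqref{eq:statDimSloppy}, the standing assumption $\alpha_i > 0$ forces $\alpha_i \abs{S_i} = \abs{T \cap S_i}$ to be a positive integer for every $i$, so $\bar{\alpha} \geq 1/\max_i \abs{S_i} \geq 1/d$, and consequently $\tfrac{2}{d}\sqrt{1/\bar{\alpha}} \leq 2/\sqrt{d}$.

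The only non-routine step is recognizing that one is free to rescale the entries of $x_0$ on $T$ in order to maximize $\norm{x_0}_{1,w}/\norm{x_0}_2$ without affecting $\delta$; once this degree of freedom is identified, the remainder of the argument is book-keeping with Cauchy--Schwarz and the subdifferential formula already supplied by Lemma~\ref{lem:distSubdiffL1Weight}.
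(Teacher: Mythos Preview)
Your argument is correct and follows essentially the same route as the paper: apply Theorem~\ref{th:statDimBound}, exploit that $\partial_{x_0}\norm{\cdot}_{1,w}$ and $\mathcal{D}(\norm{\cdot}_{1,w},x_0)$ depend only on $\sgn\,x_0$ to replace $x_0$ by a vector with $|x|$ proportional to $w|_T$ (thereby saturating Cauchy--Schwarz in the denominator), bound the numerator by $\sqrt{\sum_j w_j^2}$, and finish with $\bar\alpha\geq 1/d$. The paper's proof is organized identically, with the same choice of surrogate vector and the same two estimates.
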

 
 \begin{proof}
 We use Theorem \eqref{th:statDimBound}. To control the error term, by using that $\abs{S_i \cap T} =  \alpha_i \rho_i  d$ for each $i$, we obtain the estimate
 \begin{align*}
 	\norm{x}_{1,w} &= \sum_{j \in T} w_j\abs{x_j} \leq \norm{x}_2\sqrt{\sum_{j \in T} w_j^2}= \sqrt{d\sum_{i=1}^k\rho_i\alpha_i\omega_i^2}  \norm{x}_2
 \end{align*}
 which is valid for any $x$ supported on $T$. We even have equality if we choose $\abs{x}$ parallel to $w \vert_{S_0}$. We can do this since, as pointed out in \cite{AmelunxLotzMcCoyTropp2014}, there is no need to use Theorem \eqref{th:statDimBound} directly for $x_0$. We only have to secure that  $x$ has the same $\mathcal{D}(\norm{\cdot}_{1,w},x)$ and $\partial_x \norm{\cdot}_{1,w}$ as $x_0$, since $\inf_{\tau>0} J_{\sigma, \alpha, \rho, \omega}(\tau) - \delta(\mathcal{D}(\norm{\cdot}_{1,w}, x_0))$ only depends on those entities. It is clear from the considerations in the proof of Lemma \ref{lem:distSubdiffL1Weight} that for this to be true, $x$ only has to be supported on $T$ and have the same sign pattern as $x_0$. It is equally clear that there exist such vectors $x$ so that $\abs{x}$ is parallel to $w \vert_{T}$.
 
 This proves that it is possible to choose the denominator of the error term in Theorem \eqref{th:statDimBound} equal to $\sqrt{d\sum_{i=1}^k\rho_i\alpha_i\omega_i^2}$. To bound the numerator, it suffices to notice that each vector $s \in \partial_{x_0} \norm{\cdot}_{1,w}$ satisfies
 \begin{align*}
 	\norm{s}_2 = \sqrt{ \sum_{j =1}^d s_j^2}\leq\sqrt{ \sum_{j =1}^d w_j^2} &= \sqrt{ \sum_{i=1}^k\sum_{j \in S_i} \omega_i^2} = \sqrt{d\sum_{i=1}^k \rho_i \omega_i^2},
 \end{align*}
 since $\abs{s_j}\leq w_j$ for each $j$, and $\abs{S_i}=\rho_i d$. To finish the proof of the tighter bound \eqref{eq:statDimTight}, we note that 
\begin{align*}
	\frac{\sqrt{d\sum_{i=1}^k \rho_i \omega_i^2}}{\sqrt{d\sum_{i=1}^k\rho_i\alpha_i\omega_i^2}} \leq \sqrt{\frac{d\sum_{i=1}^k \rho_i \omega_i^2}{d\bar{\alpha}\sum_{i=1}^k\rho_i\omega_i^2}} = \frac{1}{\sqrt{\bar{\alpha}}}.
\end{align*}
Finally, the bound $\eqref{eq:statDimSloppy}$ easily follows from that statement, since
\begin{align*}
	d\bar{\alpha} = \min_i d\frac{\abs{S_i \cap T}}{\abs{S_i}} \geq 1,
\end{align*}
using the facts that  $\abs{S_i \cap T} \geq 1$ and $\abs{S_i}\leq d$.
 \end{proof}
 

 \section{Optimizing $m_{S,T,w}$.}
 
	Proposition \ref{cor:statDimWeightBound} states that $m_{\calS,T,w}$ is bounded both below and above by the number $\tilde{m}_{\calS,T,w}=\inf_{\tau>0} J_{\sigma, \alpha, \rho, w}(\tau)$, up to an error which is independent of the weights $w$. In particular, if we denote $\tilde{m}_{\calS,T}= \inf_{w \geq 0}\tilde{m}_{\calS,T,w}$, we have
 \begin{align*}
 	\tilde{m}_{\calS,T} - \frac{2}{\sqrt{d}} \leq m_{S,T} \leq \tilde{m}_{\calS,T}.
 \end{align*}
 Hence, $\tilde{m}_{\calS,T}$ is equal to  $m_{\calS,T}$ up to an asymptotically vanishing error term. In this section, we prove that  $\tilde{m}_{\calS,T,w}$ has a unique minimum  at some $\omega^*$ for every $\calS$. As the proof will show, it can furthermore easily be found by minimizing a convex function in two variables. The proof uses several techniques exploited for partly other purposes in \cite{AmelunxLotzMcCoyTropp2014}.
 
\begin{theo} \label{prop:optimalWeight}
	Assume that $\abs{T} < d$ and let $x_0 \in \R^d$ be a vector supported on $T$. For each partition $\calS=(S_i)_{i=1}^k$ of $[1,\dots, d]$, there exists weights $\omega^* \in \R_{+}^k$ so that 
		\begin{align*}
			\tilde{m}_{\calS,T,\omega}=\inf_{\tau>0} \erw{\dist^2(g, \tau \partial_{x_0} \norm{\cdot}_{1,w})}
		\end{align*}
		is minimal, where $w=  \sum_{i=1}^k \omega_i \1_{S_i}$. The weights $\omega^*$ are for instance uniquely determined by the additional constraint $\norm{\omega^*}_\infty=1$.
\end{theo}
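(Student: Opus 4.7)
The crux of my approach is a scale-invariance observation: $J_{\sigma,\alpha,\rho,\omega}(\tau)$ depends on $\omega$ and $\tau$ only through the products $u_i := \omega_i \tau$, and every $u \in \R_{+}^k$ arises from some admissible pair $\omega \geq 0$, $\tau > 0$ (just pick any $\tau > 0$ and set $\omega_i = u_i/\tau$). Exchanging the infima then collapses the joint minimization:
\[
\inf_{\omega \geq 0} \tilde{m}_{\calS,T,\omega} \;=\; \sigma + \sum_{i=1}^{k} \rho_i \inf_{u \geq 0} f_i(u), \qquad f_i(u) := \alpha_i u^2 + (1-\alpha_i)\varphi(u),
\]
where the right-hand side decouples into $k$ independent scalar problems because each summand depends on only one $u_i$.

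Next I would analyse $f_i$ on $[0,\infty)$ by elementary convex analysis. Differentiation under the integral in \eqref{eq:varphifunc} yields $\varphi'(u) = -2\sqrt{2/\pi}\int_u^\infty (x-u) e^{-x^2/2} dx \leq 0$ and $\varphi''(u) = 2\sqrt{2/\pi} \int_u^\infty e^{-x^2/2} dx \geq 0$, so $\varphi$ is convex and strictly decreasing. Since $\alpha_i > 0$, the quadratic term makes $f_i$ strictly convex and coercive on $[0,\infty)$, guaranteeing a unique minimizer $u_i^* \geq 0$. From $f_i'(0) = (1-\alpha_i)\varphi'(0)$ one reads that $u_i^* > 0$ whenever $\alpha_i < 1$, while $u_i^* = 0$ if $\alpha_i = 1$.

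Finally I would convert $u^*$ back into weights. The hypothesis $\abs{T} < d$ forces $\sum_i \rho_i \alpha_i < 1$, so at least one $\alpha_i$ is strictly less than $1$; consequently $\tau^* := \max_i u_i^* > 0$. Setting $\omega_i^* := u_i^*/\tau^*$ then produces a minimizer with $\omega^* \in \R_{+}^k$ and $\norm{\omega^*}_\infty = 1$. Uniqueness follows by running the argument in reverse: any optimal pair $(\omega,\tau)$ with $\tau > 0$ must satisfy $\omega_i \tau = u_i^*$ by uniqueness of the scalar minimizers, so $\omega$ is determined up to a single positive scaling, and the constraint $\norm{\omega}_\infty = 1$ fixes that scalar. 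I do not anticipate any serious obstacle; the entire proof rests on the scale invariance $J_{\sigma,\alpha,\rho,\omega}(\tau) = J_{\sigma,\alpha,\rho,c\omega}(\tau/c)$, which reduces the $(k{+}1)$-dimensional optimization to $k$ decoupled one-dimensional strictly convex problems.
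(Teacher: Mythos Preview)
Your argument is correct and is precisely the alternative route that the paper itself acknowledges immediately after its own proof: using the explicit formula from Lemma~\ref{lem:distSubdiffL1Weight}, the objective depends only on the products $u_i=\omega_i\tau$ and separates additively, so the joint problem decouples into $k$ independent one-dimensional strictly convex, coercive minimizations. The paper takes a deliberately more abstract path, proving directly that $\phi(v)=\erw{\dist^2(g,\Delta^v C)}$ is strictly convex, continuous and coercive on $\R_+^k$ by working with the geometry of the dilated subdifferentials $\Delta^v C$ and without ever invoking the explicit integral expression for $\varphi$; the formula is only brought in afterwards to extract Corollary~\ref{cor:optWeightCalc}. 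Your approach is shorter, yields the characterisation of the optimal $\omega_i^*$ as an immediate by-product, and makes the decoupling behind Theorem~\ref{th:SynthesisThresholds} transparent from the outset. The paper's approach buys generality: it does not rely on the additive structure of $J_{\sigma,\alpha,\rho,\omega}$ and would transfer to other norms or regularisers for which no closed form of $\erw{\dist^2(g,\tau\,\partial_{x_0}\norm{\cdot})}$ is available. One small point worth making explicit in your write-up: for the uniqueness direction you need that for any fixed $\omega\neq 0$ the inner infimum over $\tau>0$ is actually attained, so that an optimal $\omega$ really lies on the ray through $u^*$; this follows since $\tau\mapsto\sum_i\rho_i f_i(\omega_i\tau)$ is continuous, tends to $\infty$ as $\tau\to\infty$ (because $\alpha_i>0$), and tends to $1-\sigma>F(u^*)$ as $\tau\to 0^+$.
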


\begin{proof}
First, we note that the chain rule proves that
\begin{align*}
	\partial_{x_0} \norm{\cdot}_{1,w} &= D^w \left(\partial_{D^w x_0}. \norm{\cdot}_1\right) =D^w\left(\partial_{ x_0} \norm{\cdot}_1\right)=: D^w C,
\end{align*}
where we used ${D^w}^*=D^w$ and the fact that the subdifferential $\partial_{x}\norm{\cdot}_1$ only depends in the sign pattern of $x$ . Hence, since $x_0$ and $D^wx_0$ have the same sign pattern, $\partial_{D^w x_0} \norm{\cdot}_1 = \partial_{x_0} \norm{\cdot}_1$. If for $v \in \R_{+}^k$, we denote $\Delta^v = \diag( \sum_{i=1}^k v_i \1_{S_i})$, then $\tau \Delta^\omega = \Delta^{\tau \omega}$ and hence
\begin{align*}
	\inf_{\omega >0} \inf_{\tau>0} \erw{\dist^2(g, \tau \partial_{x_0} \norm{\cdot}_{1,w})}= \inf_{v >0} \erw{\dist^2(g, \Delta^v C)}.
\end{align*}
Therefore, if we prove that there exists a unique $v^*\neq 0$ so that $\erw{\dist^2(g, \Delta^v C)}$ is minimal, then $\omega^* = v^*/\norm{v^*}_\infty$ is the optimal weight we are looking for, since then 
\begin{align*}
	\inf_{\tau>0} \mathbb{E}\big(\dist^2(g, \tau \partial_{x_0} \norm{\cdot}_{1,w^*})\big) = \inf_{\tau>0} \erw{\dist^2(g, \Delta^{\tau \omega^*}C} = \erw{\dist^2(g, \Delta^{v^*} C)} \leq \erw{\dist^2(g, \Delta^{ \tau \omega} C)}
\end{align*} 
for any $\tau>0$, $\omega\geq 0$, from which the optimality of $\omega^*$ follows. $\omega^*$ of course denotes $\sum_{i=1}^k \omega^*_i \1_{S_i}$.

In order to prove that there exists a unique minimizer of $\phi(v) = \erw{\dist^2(g, \Delta^v C)}$ on $\R_{+}^k$, it suffices to prove that $\phi$ is strictly convex, continuous and coercive (``large outside a ball``).

	\emph{Strict convexity:} Let $s_i \in \Delta^{v_i}C$, $i=1,2$, and $\theta \in [0,1]$. Then (due to the structure of $C=\partial_{ x_0} \norm{\cdot}_1$), we know that $s_i = \Delta^{v_i} (\sgn \ x_0 + u_i)$ for some $u_i$ with $\supp \ u_i \cap \supp \ x_0 = \emptyset$ and $\norm{u_i}_\infty \leq 1$. This implies that
	\begin{align*}
		\theta s_1 + (1-\theta)s_2 =& (\theta \Delta^{v_1} 
		+ (1-\theta) \Delta^{v_2})\sgn \ x_0 + (\theta \Delta^{v_1} u_1+ (1-\theta) \Delta^{v_2}u_2).
	\end{align*}
	The right hand side of the previous equation is an element of $\Delta^{\theta v_1 + (1-\theta) v_2}C$. This since $\theta \Delta^{v_1} + (1-\theta) \Delta^{v_2}=\Delta^{\theta v_1 + (1-\theta) v_2}$, the action of diagonal matrices does not change the support of a vector, and the chain of inequalities
	 \begin{align*}
	 \big\vert \langle\theta \Delta^{v_1} u_1+ (1-\theta) \Delta^{v_2}u_2,e_i\rangle \big\vert 
	 & \leq \theta \abs{\sprod{\Delta^{v_1}u_1,e_i}} +
	  (1-\theta)\abs{\sprod{\Delta^{v_2}u_2,e_i}} \\
	  & = \theta \abs{u_1(i)}v_1(i) + (1-\theta)\abs{u_2(i)}v_2(i) \leq \theta v_1(i) + (1-\theta)v_2(i).
	 \end{align*} 
	 From the latter we deduce that $\theta \Delta^{v_1} u_1+ (1-\theta) \Delta^{v_2}u_2$ can be written as $\Delta^{\theta v_1 + (1-\theta)v_2}u$, where $\supp \ u \cap \supp \ x_0~=                                                                                                                                          \emptyset$ and $\norm{u}_\infty \leq 1$.
	 We have thus proven that for every triple $v_1, v_2 \in \R^k_{+}$, $\theta \in [0,1]$,
	\begin{align*}
		\theta \Delta^{v_1} C + (1-\theta) \Delta^{v_2} C \sse \Delta^{\theta v_1 + (1-\theta)v_2}C.
	\end{align*}
	Therefore, for each fixed $ u \in \R^d$, $s_i \in \Delta^{v_i}C$, $i=1,2$, we have
	\begin{align*}
		\dist(u, \Delta^{\theta v_1 + (1-\theta)v_2}C) \leq \dist(u,\theta \Delta^{ v_1}C + (1-\theta) \Delta^{v_2}C)& \leq \norm{u-(\theta s_1 + (1-\theta)s_2)}_2 \\
		&\leq \theta \norm{u- s_1}_2 + (1-\theta) \norm{u- s_2}_2
	\end{align*}
	Taking the infimum over $s_1$ and $s_2$, it follows that for each fixed $u$, the function $v \to \dist(u, \Delta^v C)$ is convex. The square of a non-negative convex function is still convex, and since choosing $u=g$ a Gaussian and taking the expectation also does not destroy the convexity, and we can conclude that $\phi$ is convex. 
	
	It remains to prove strict convexity. Since convexity already has been established, it suffices to prove that there does not exist $v_1 \neq v_2$ and $\theta \in (0,1)$ with $\phi(\theta v_1+(1-\theta)v_2)=\theta \phi(v_1) + (1-\theta)\phi(v_2)$. Towards a contradiction,  assume that this is not true. Then
	\begin{align*}
		\mathbb{E}\Big(\dist^2&(g, \Delta^{\theta v_1 + (1-\theta)v_2}C)\Big) = \erw { \theta \dist^2(g, \Delta^{v_1}C)+ (1-\theta)\dist(g, \Delta^{v_2}C)}.
	\end{align*}
	According to what was just proven, the expression over which we are taking the expectation on the right hand side is not smaller than the expression on the left hand side. Hence, in order for equality to be true, the two expressions have to be equal almost surely. But there exists some $u \in \Delta^{\theta v_1 + (1-\theta)v_2}C$ which does not lie in $ \Delta^{v_1}C$ (since $v_1 \neq \theta v_1 + (1-\theta)v_2$). For this vector, strict equality must hold, since $\dist^2(u, \Delta^{\theta v_1 + (1-\theta)v_2}C) = 0 < \dist^2(u, \Delta^{v_1}C)$. Now, since the distance from a vector $u$ to a convex set is continuously dependent on $u$, the strict inequality even holds in some $\epsilon$-ball around $u$, which has positive Gaussian measure. The two expressions are hence not almost surely equal. This is a contradiction, and hence $\phi$ is strictly convex.

	\emph{Continuity:} We have, for $u \in \R^d$ fixed and each $p \in C$
	\begin{align*}
		\dist (u,  \Delta^v C) \leq \norm{u - \Delta^v p}_2 \leq \norm{u-\Delta^{\hat{v}}p}_2 + \sum_{p \in C} \norm{\Delta^{\hat{v}}p-\Delta^{v}p}_2
	\end{align*}
	\begin{align*}
		\vert\dist (u,  \Delta^v C) - \dist(u, \Delta^{\hat{v}}C)\vert & \leq \sup_{p \in C} \norm{(\Delta^v-\Delta^{\hat{v}}) p}_2 \leq  
		\sup_{p\in C}\norm{\Delta^{v-\hat{v}}p}_2 \\
		 &= \sup_{p \in C} \sqrt{\sum_{i=1}^d(v(i)-\hat{v}(i))^2 p(i)^2 }\leq \norm{v - \hat{v}}_2.
	\end{align*}
	Denote 
	$\dist(g, \Delta^v C)=r_v$ and $\dist(g, \Delta^{\hat{v}} C)=r_{\hat{v}}$, respectively. The inequality just proved then reads $\abs{r_v-r_{\hat{v}}} \leq \norm{v - \hat{v}}_2$ and we may estimate
	\begin{align*}
		\abs{\phi(v) -\phi(\hat{v})}
		&=\abs{ \erw{ (r_v - r_{\hat{v}})(r_v+r_{\hat{v}})} } \leq \sqrt{\erw{\left(r_v-r_{\hat{v}}\right)^2}}\sqrt{\erw{\left(r_v+r_{\hat{v}}\right)^2}}	\\
		 &\leq \norm{v - \hat{v}}_2 \sqrt{2\erw{r_v^2+ r_{\hat{v}}^2}} 	= \norm{v - \hat{v}}_2 \sqrt{2(\phi(v)+\phi(\hat{v}))}.	
	\end{align*}
	$\phi$ is furthermore locally bounded, since $\phi(v) = \erw{\dist^2(g, \Delta^v C)} \leq 2\erw{\norm{g}^2 + \sup_{s\in C} \norm{D^vs}_2^2} = 2 + 2\norm{v}_2^2$. Hence, $\phi$ is continuous.
	
	\emph{Coercivity:} We have to prove that $\norm{v}_2 \to \infty$ implies $\phi(v) \to \infty$. Let us first note that due to our global assumption  $\alpha_i>0$ for each $i$, there exists for every $i=1, \dots, k$ an index $j_i \in S_i$ with $j_i \in \supp \ x_0$. For each such index and each $p \in C$, we have due to $p \vert_{\supp \ x_0} = \sgn \ x_0$ 
	\begin{align*}
		\inf_{p \in C} \norm{\Delta^v p}_2^2 \geq \inf_{p\in C} \sum_{i=1}^k v_i^2 p(j_i)^2 = \sum_{i=1}^k v_i^2 = \norm{v}_2^2.
\end{align*}	 
That implies that for any given $K>0$, there exists an $R>0$ so that $\norm{v}_2\geq R \Rightarrow \inf_{p \in C} \norm{\Delta^v p}_2^2 \geq K$. Hence, if $\norm{v}\geq R$ and $\norm{u}\leq r \leq K$, we have
\begin{align*}
	\dist^2(u, \Delta^v C) \geq \pos\big( \inf_{p \in C} \norm{\Delta^v p}_2 &- \norm{u}_2\big)^2 \\
	& \geq (K-r)^2. 
	\end{align*}
	 This implies that if $\norm{v}_2 \geq R$,
\begin{align*}
	\erw{\dist^2(g, \Delta^v C)}^2 &\geq \erw{\1_{\set{\norm{g} \leq r}}\dist^2(g, \Delta^v C)}^2 \\
	&\geq \prb{ \norm{g}\leq r} (K-r)^2.
\end{align*}
Since $\prb{ \norm{g}\leq r}>0$, this proves the claim. 

	Now it remains to prove that the minimum is not attained in $w=0$. For this, consider choosing all weights equal to $\lambda>0$, i.e $\omega=\lambda (1,\dots,1)$. Lemma \ref{lem:distSubdiffL1Weight} applied for $\omega_i=\lambda$  for all $i$ yields
	\begin{align*}
		\phi(\lambda (1,\dots,1))&= \sigma + \lambda^2 \sigma + (1-\sigma) \varphi(\lambda).
	\end{align*}
	We may estimate $\varphi(\lambda)$ by
	\begin{align*}
	\varphi(\lambda)=\sqrt{\frac{2}{\pi}}\int_\lambda^\infty (x-\lambda)^2 \exp(-x^2/2)dx  & \leq\sqrt{\frac{2}{\pi}} \int_0^\infty (x^2 - 2x\lambda +\lambda^2) \exp(-x^2/2)dx = \left(1 + \lambda^2 - \lambda \sqrt{\frac{2}{\pi}}\right).
	\end{align*}
	Therefore
	\begin{align*}
		\phi(\lambda (1,&\dots,1))= \sigma + \lambda^2 \sigma + (1-\sigma) \varphi(\lambda) \leq  \sigma + \lambda^2 \sigma +(1-\sigma)\left(1 + \lambda^2 - \lambda \sqrt{\frac{2}{\pi}}\right)<1 = \phi(0)
	\end{align*}
for small values of $\lambda$, since $\sigma<1$ by assumption.
	\end{proof}
	Having established an abstract result on the existence of optimal weights $\omega$, we now take a step back to see that the proof of said theorem actually provides a way to concretely calculate them -- we only need to minimize the function $\phi(v) = \erw{\dist^2(g, \Delta^v C)}$. Writing $v=\tau \omega$ as in the proof, we obtain, with the help of Lemma \ref{lem:distSubdiffL1Weight}
	\begin{align*}
	\phi(v)=\sigma + \sum_{i=1}^k \rho_i\left(\alpha_i v_i^2  + (1-\alpha_i) \varphi(v_i)\right).
	\end{align*}
	Since $\phi$ has the structure of a sum of convex functions, it can further be minimized by considering one variable at a time.  One could argue that this would have been a much simpler way of proving the assertion of Theorem \ref{prop:optimalWeight}, but the approach used in that proof has more potential to be applied to more general problems, and hence has an interest of its own.
	
	Since the convex functions are even differentiable, it is not hard to write down a formula for calculating $\omega_i=v_i$ analytically.  As a matter of fact, this has already been carried out in \cite[Prop 4.5]{AmelunxLotzMcCoyTropp2014}. There, the formula was used in the process of calculating the statistical dimension of $\mathcal{D}(\norm{\cdot}_1, x_0)$. In particular, it was not recognized as an optimal choice of a weight $\omega_i$. For completeness, we include the statement.
	
	\begin{cor} \label{cor:optWeightCalc}
		The optimal weights $\omega^*_i$ described in Theorem \ref{prop:optimalWeight} are given through the equations
		\begin{align*}
			\alpha_i \omega^*_i = (1- \alpha_i)\sqrt{\frac{2}{\pi}}\int_{\omega^*_i}^\infty (x-\omega_i)\exp(-x^2/2)dx, \quad  i = 1 \dots k.
		\end{align*}
		In particular, $\omega^*$ is independent of $\rho$.
	\end{cor}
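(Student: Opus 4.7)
The plan is to simply exploit the separable structure of $\phi$ that was already identified in the proof of Theorem \ref{prop:optimalWeight}, reducing the problem to one-dimensional convex optimization which can be solved by first-order conditions. Concretely, combining the substitution $v = \tau\omega$ with Lemma \ref{lem:distSubdiffL1Weight} yields
\[\phi(v) = \sigma + \sum_{i=1}^k \rho_i\bigl(\alpha_i v_i^2 + (1-\alpha_i)\varphi(v_i)\bigr),\]
so $\phi$ decouples into a sum of one-variable functions $f_i(v_i) = \alpha_i v_i^2 + (1-\alpha_i)\varphi(v_i)$, each multiplied by a positive constant $\rho_i$. Since positive scaling of a summand does not move the location of its minimum, the optimal $v_i^\ast$ depends only on $\alpha_i$, not on $\rho$, which already gives the last assertion of the corollary.

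Next I would differentiate $\varphi$ under the integral sign. Writing
\[\varphi(t)=\sqrt{\tfrac{2}{\pi}}\int_t^\infty (x-t)^2 \exp(-x^2/2)\,dx,\]
the boundary contribution vanishes (the integrand is $(t-t)^2=0$ there), and Leibniz' rule leaves only the $t$-derivative of the integrand, giving
\[\varphi'(t) = -2\sqrt{\tfrac{2}{\pi}}\int_t^\infty (x-t)\exp(-x^2/2)\,dx.\]
Setting $f_i'(v_i)=0$ then reads
\[2\alpha_i v_i - 2(1-\alpha_i)\sqrt{\tfrac{2}{\pi}}\int_{v_i}^\infty (x-v_i)\exp(-x^2/2)\,dx = 0,\]
which after dividing by $2$ is exactly the equation claimed in the corollary.

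Finally, I would justify that this equation actually determines $\omega_i^\ast$ uniquely. Each $f_i$ is strictly convex (inherited from the strict convexity of $\phi$ established in Theorem \ref{prop:optimalWeight}, or seen directly from $f_i''(v_i) = 2\alpha_i + (1-\alpha_i)\varphi''(v_i)$ with $\varphi''\ge 0$ and $\alpha_i>0$) and coercive on $[0,\infty)$, so it has a unique minimizer $v_i^\ast>0$, characterized by the above first-order condition. Identifying $\omega_i^\ast$ with this $v_i^\ast$ (equivalently, choosing $\tau$ in the decomposition $v = \tau\omega$ so that the relation holds) concludes the proof. There is no real obstacle here: the only nontrivial step is the differentiation of $\varphi$, which is entirely routine once one notices that the boundary term vanishes.
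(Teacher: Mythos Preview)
Your proposal is correct and follows exactly the approach the paper sketches in the paragraph preceding the corollary: exploit the separable structure of $\phi(v)=\sigma+\sum_i \rho_i f_i(v_i)$, minimize each $f_i$ individually by setting $f_i'=0$, and read off that the resulting $v_i^\ast$ depends only on $\alpha_i$. The paper itself does not spell out the differentiation of $\varphi$ or the first-order condition, instead deferring to \cite[Prop.~4.5]{AmelunxLotzMcCoyTropp2014}; you have simply filled in that routine computation, so there is nothing to correct or contrast.
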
 
Finally, we can extract one more interesting corollary, which in fact is a counterpart of Theorem \ref{th:SynthesisUpperThresholds} in this setting. Using the fact that $\sigma = \sum_{i=1}^k \alpha_i \rho_i$, one can see that
	\begin{align*}
	\phi(v)=\sum_{i=1}^k \rho_i\left(\alpha_i (1+v_i^2 ) + (1-\alpha_i) \varphi(v_i)\right).
	\end{align*}
	Noticing that the term in the sum is actually the function we need to minimize in order to find $\mu_{\alpha_i \abs{S_i}, \abs{S_i}}$, we immediately arrive at the following result, which also summarizes the entirety of this paper.

	\begin{theo}\label{th:SynthesisThresholds}
		Let $T \sse [1, \dots, d]$ with $\abs{T}=s<d$, and a partition $\calS=(S_i)_{i=1}^k$ of $[1, \dots d]$ be given. Then there exist weights $\omega^*$ which minimize $\tilde{m}_{S,T,w}$ and which are unique up to multiplication with a positive scalar. Furthermore, we have
		\begin{align*}
			\tilde{m}_{\calS,T}=\sum_{i=1}^k\rho_i \mu_{\alpha_i \abs{S_i},\abs{S_i}},
		\end{align*}
		and the same also for the lower bounds $\tilde{n}_{S,T}=\tilde{m}_{\calS,T}-2/\sqrt{d}$. 
	\end{theo}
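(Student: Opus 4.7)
\emph{Plan of proof.} The existence and uniqueness of the optimal weights $\omega^*$ (up to multiplication by a positive scalar) is essentially already contained in Theorem \ref{prop:optimalWeight}: that theorem produces a unique minimiser $v^* \neq 0$ of $\phi(v) = \erw{\dist^2(g, \Delta^v C)}$, and any pair $(\tau, \omega)$ with $\tau \omega_i = v_i^*$ produces the same value of $\tilde m_{\calS,T,w}$, so the weight vector is determined only up to a positive scaling. My plan therefore concentrates on the decomposition identity.

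The first key observation is that, because $(S_i)_{i=1}^k$ is a \emph{partition} of $[1, \dots, d]$, the global sparsity parameter decomposes as
\[
  \sigma \;=\; \frac{|T|}{d} \;=\; \frac{1}{d}\sum_{i=1}^k |T \cap S_i| \;=\; \sum_{i=1}^k \rho_i \alpha_i.
\]
Substituting this relation into $J_{\sigma, \alpha, \rho, \omega}(\tau)$ and performing the change of variables $v_i = \tau \omega_i$ introduced in the proof of Theorem \ref{prop:optimalWeight} yields the separable representation
\[
  \phi(v) \;=\; \sum_{i=1}^k \rho_i\!\left(\alpha_i(1 + v_i^2) + (1-\alpha_i)\varphi(v_i)\right),
\]
which is already recorded in the paragraph preceding the theorem statement.

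With this in hand, the infimum over $v \geq 0$ factors summand-by-summand, since each term depends on a single coordinate $v_i$:
\[
  \tilde m_{\calS,T} \;=\; \inf_{v \geq 0} \phi(v) \;=\; \sum_{i=1}^k \rho_i \inf_{v_i > 0} \bigl[\alpha_i(1+v_i^2) + (1-\alpha_i)\varphi(v_i)\bigr].
\]
The $i$-th scalar infimum is precisely $\inf_{\tau > 0} J_{\alpha_i}(\tau)$, i.e.\ the quantity identified with $\mu_{\alpha_i|S_i|,|S_i|}$ through the quoted Proposition 4.5 of \cite{AmelunxLotzMcCoyTropp2014} together with Theorem \ref{th:statDim}, applied in ambient dimension $|S_i|$ with sparsity $\alpha_i |S_i|$. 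Summing the contributions produces the claimed formula. The analogous identity for the lower bound $\tilde n_{\calS,T}$ is then immediate from its definition $\tilde n_{\calS,T} = \tilde m_{\calS,T} - 2/\sqrt d$ by subtracting $2/\sqrt d$ from both sides, and the fact that $\omega^*$ minimises $\tilde n$ as well follows since the two bounds differ only by a constant independent of $w$.

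\emph{Main obstacle.} Essentially all of the technical work has been done: strict convexity, continuity and coercivity in Theorem \ref{prop:optimalWeight}, and the sandwich bound in Proposition \ref{cor:statDimWeightBound}. The only real point to verify is that the separable structure of $\phi$ actually permits coordinate-wise minimisation on $\R_{+}^{k}$ (immediate, since each summand is non-negative and depends on a single $v_i$), and to make explicit that $\inf_{\tau} J_{\alpha_i}(\tau)$ coincides with $\mu_{\alpha_i |S_i|, |S_i|}$ up to the $O(1/d)$ error that is already absorbed in the gap between $\tilde m$ and the true threshold.
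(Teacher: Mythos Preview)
Your proposal is correct and follows exactly the paper's own argument: the paper likewise invokes Theorem \ref{prop:optimalWeight} for existence and uniqueness, uses the partition identity $\sigma = \sum_i \rho_i \alpha_i$ to rewrite $\phi(v)$ in the separable form displayed just before the theorem, and then identifies each summand with the function $J_{\alpha_i}$ whose infimum defines $\mu_{\alpha_i|S_i|,|S_i|}$. Your remark about the $O(1/d)$ gap between $\inf_\tau J_{\alpha_i}(\tau)$ and the exact $\mu_{\alpha_i|S_i|,|S_i|}$ is a valid observation that the paper glosses over.
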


	\section{Numerical experiments}
	
	In this section, we present the results of some numerical experiments investigating the practical performance of the weighting strategy described above. Before presenting the set up of the experiment, let us note that it is numerically relatively easy to calculate the optimal weights described in Corollary \ref{cor:optWeightCalc}; we simply have to solve the $k$ independent, one-dimensional equations
	\begin{align*}
	\alpha_i \omega^*_i = (1- \alpha_i)\sqrt{\frac{2}{\pi}}\int_{\omega^*_i}^\infty (x-\omega_i)\exp(-x^2/2), \quad  i = 1 \dots k
	\end{align*}
	for $\omega_i$. For this, we used the MATLAB routine \texttt{fzero}. To be able to compare these weights to other strategies, we furthermore normalized them so that $\norm{\omega}_\infty=1$. In a similar manner, one can explicitly write down the optimality condition for $J_{\sigma, \alpha, \rho, \omega}'(\tau)=0$ to calculate the thresholds induced by given weights $\omega$. \newline
	
	In the first set of experiments, we consider the case of two sets $S_1, S_2$.  We fix the ambient dimension to $d=100$ and choose random $10$-sparse signals - the support of $x_0$ is drawn at random, and the values of $x_0$ on the support are  drawn from the standard multivariate normal distribution. Then we draw $3$, $5$ or $7$, respectively, indices in the support together with $7$, $5$ or $3$, respectively, indices outside the support to form a group $S_1$. The rest of the indices is then called $S_2$. Hence, in each experiment, $\rho_1=\sigma=0.1$ and $\alpha_1=0.3, 0.5$ and $0.7$, respectively. 
	
\begin{figure}
\center
 \includegraphics[scale=.4]{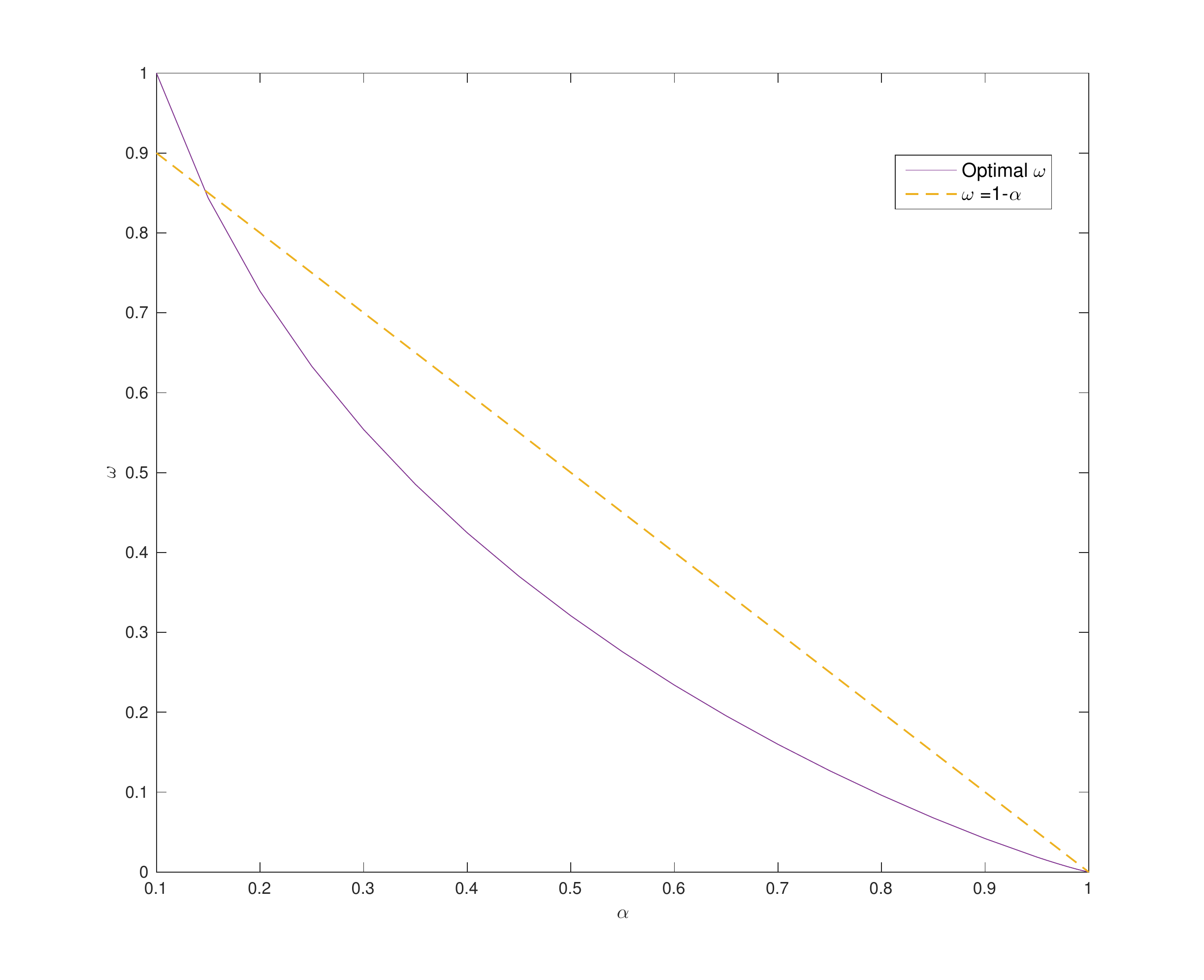}
 \caption{Plot of the optimal weight $\omega_1$ ($\omega_2$ is always equal to $1$) for $\rho=(0.1,0.9)$ for different $\alpha_1$ compared with $1- \alpha_1$. \label{fig:Weights}}
\end{figure}

\begin{figure}
\center
	\includegraphics[scale=.4]{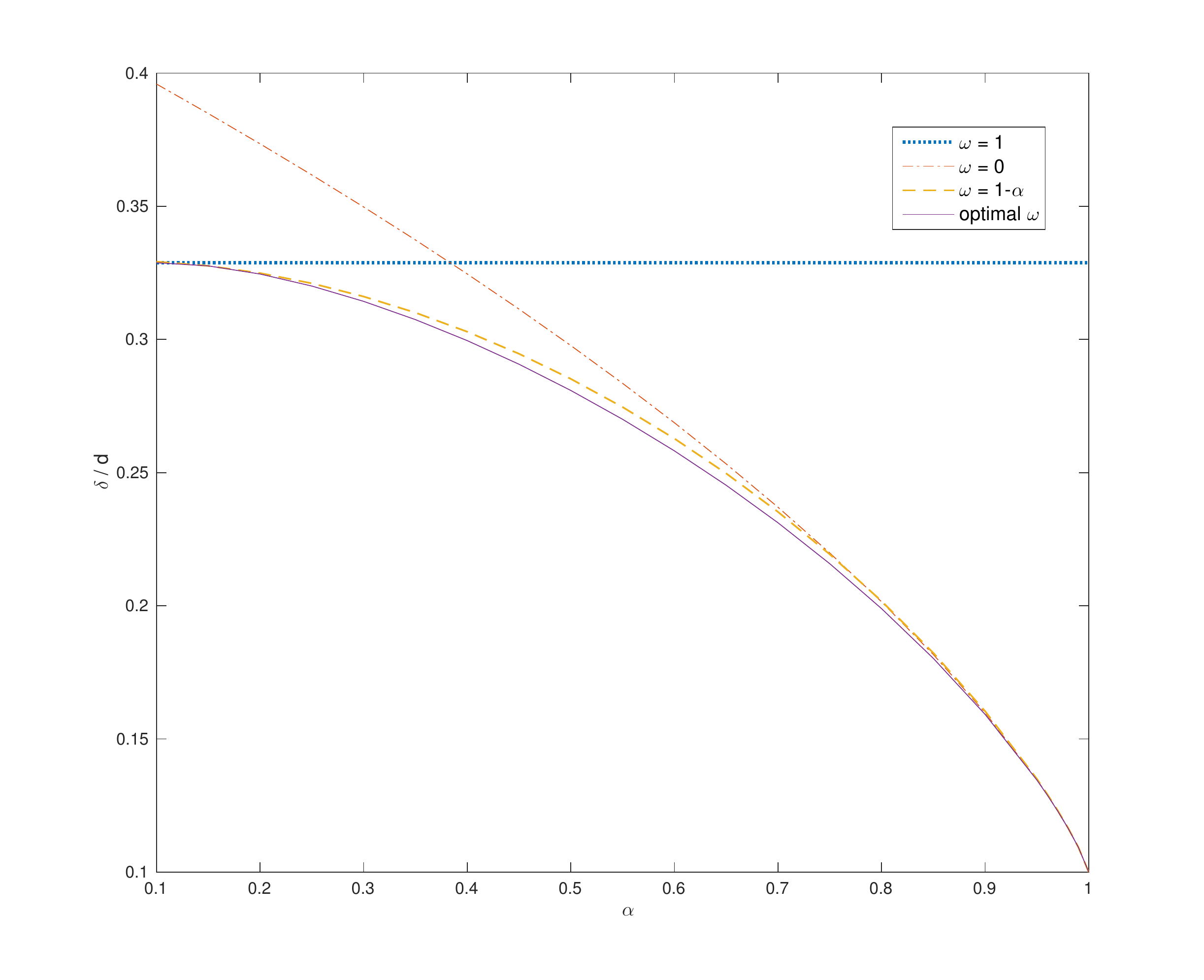}
	\caption{The threshold amount of Gaussian measurements needed to secure recovery with weighted $\ell_1$-minimization for different weighting strategies and $\rho=(0.1,0.9)$, depending on $\alpha_1$. \label{fig:Dims}}
\end{figure}	
	
	We consider $4$ strategies: 
	\begin{enumerate}
	\item $\omega=(1,1)$, which corresponds to standard $\ell_1$-minimization,
	\item the extreme strategy  $\omega=(0,1)$,
	\item  $\omega=(1-\alpha(1),1)$, i.e. the  one from \cite{MansourSaab2014},
	\item  the one with weights calculated as proposed in this work. 
	\end{enumerate}
	
	The resulting weights for strategy $(4)$ are depicted in Figure \ref{fig:Weights} for different values of $\alpha$. The theoretical thresholds $\inf_{\tau>0} J_{\sigma, \rho, \alpha,\omega}(\tau)$ were also calculated and are depicted in Figure \ref{fig:Dims}. As can be seen, the statistical dimensions for the our choice are in all cases lower compared to the other approaches. The strategy from \cite{MansourSaab2014} is however very close to the one described in this paper.
	
	To test the performance in practice, we for each $m \in [1, \dots, 35]$ draw a matrix $A\in \R^{m,d}$ according to the Gaussian distribution and solve the reweighted $\ell_1$-minimization problem  $(P_1)$ with help of the matlab package \texttt{cvx} \cite{cvx}. For each $m$, $1000$ experiments are performed, and a success is declared if the solution of the minimization problem differs no more than $0.001$ in $\ell_2$-norm from $x_0$. The results can be found in Figure \ref{fig:omegas}.
	
	The figures show that reweighted $\ell_1$-minimization performs significantly better using the weight chosen as in Theorem \ref{prop:optimalWeight} compared to standard Basis Pursuit ($\omega=1$) for all $\alpha$ tested. The same is true for the comparison to the strategy $\omega=0$ in the cases $\alpha=.3$ and $\alpha=.5$, but not in the case $\alpha_1=0.7$. This was expected, since experiments performed by the authors of \cite{FriedMansSaabYil2012} indicated that the choice of $\omega$ gets more important as $\alpha$ gets lower. The difference between our strategy and the one proposed in \cite{MansourSaab2014} is also present, however not significant. \newline

	\begin{figure} 
	\begin{multicols}{2}
	\centering
		\includegraphics[scale=.28]{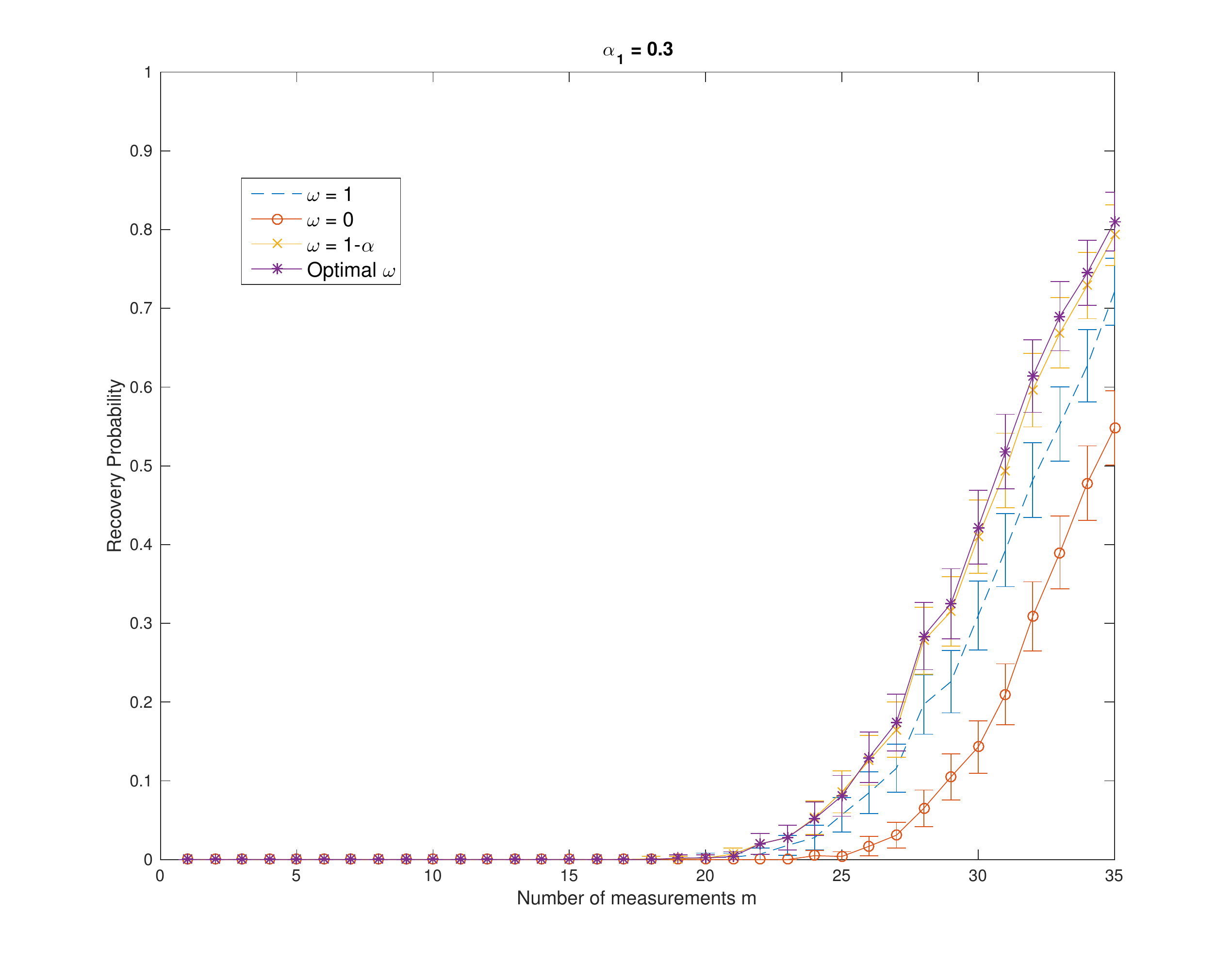}
	\includegraphics[scale=.28]{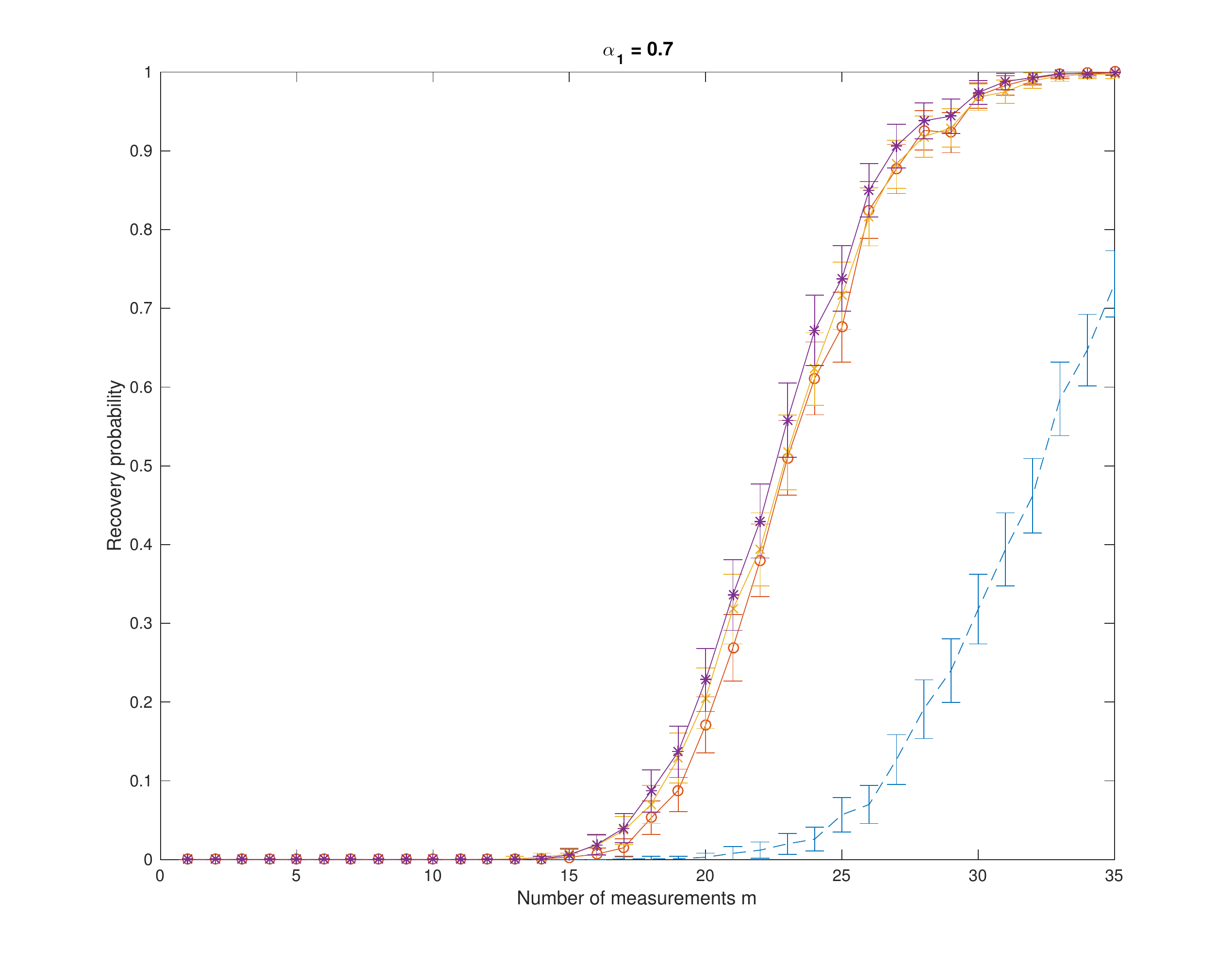}		
		
		\includegraphics[scale=.28]{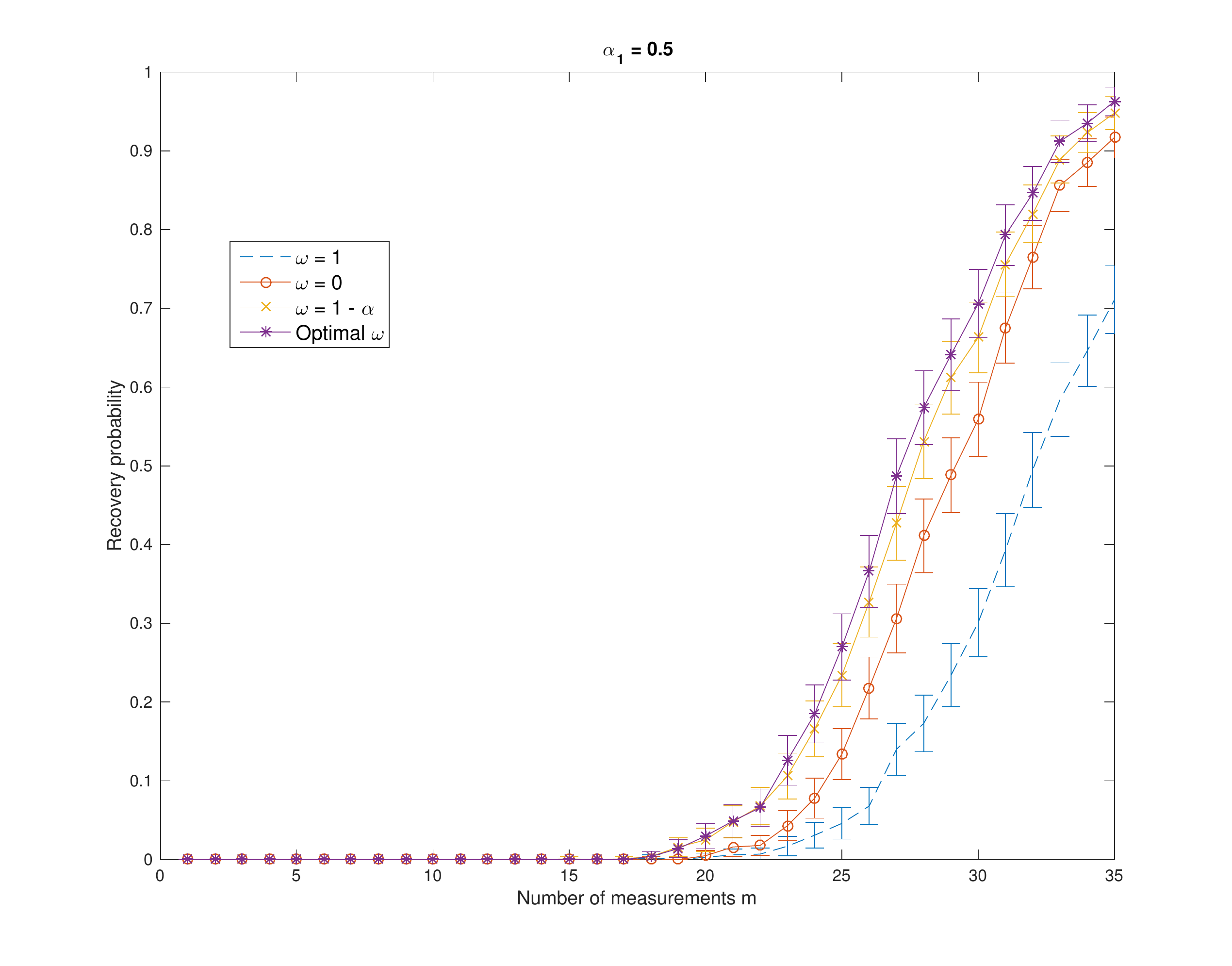}

		\caption{ Numerical experiments concerning the effect of optimally chooosing $\omega$. In each figure, $d=100$, $\sigma=\rho_1=0.1$. $\alpha_1$ is different in each figure -- $0.3$ in the upper left figure, $0.5$ in the upper right figure and $0.7$ in the lower figure, respectively. The calculated optimal weight $\omega_1$ ($\omega_2$ is always equal to $1$) in the respective experiments are equal to $0.5539$ $(\alpha_1=0.3)$, $0.3208$ $(\alpha_1=0.5)$ and $0.1599$ $(\alpha_1=0.7)$. The error bars are corresponding to 3 standard deviations.\label{fig:omegas}}
		\end{multicols}
	\end{figure}

	In order to see that also the theory for $k>2$ is practically relevant, we make another experiment and consider the case when $\alpha=(4/5,3/10,2/15,1/70)$ and $\rho=(.05,.1,.15,.7)$. $A$ is chosen as above for different values of $m$. We consider five different strategies
	\begin{enumerate}
		\item Set $\omega = 1$ (i.e. perform standard $\ell_1$-minimization.
		\item Consider the union $S_1 \cup S_2 \cup S_3$ as one region with $\rho=.3$ and $\alpha=9/30$, and choose the two weights as $\omega = (1- \alpha, 1)$.
		\item Consider the union $S_1 \cup S_2 \cup S_3$ as one region with $\rho=.3$ and $\alpha=9/30$, and choose the two weights as proposed in this work.
		\item Choose four weights, one for each $S_i$, with $\omega_4=1$ and $\omega_i = 1- \alpha_i$ for $i=1, \dots 3$.
		\item Choose four weights as proposed in this work.
	\end{enumerate}

	Note that although strategy $(4)$ bare resemblances to the one proposed in \cite{MansourSaab2014}, there are no theoretical results which motivate why it should be used. In particular, it was not proposed by the authors of that paper, since they only considered the case $k=2$. It should only be seen as a heuristic choice for comparison purposes. The results are depicted in Figure \ref{fig:ks}. We see that the optimal strategy proposed in this work involving four sets $S_i$ is perform significantly better than all of the other strategies.

	\begin{figure}
	\centering
	\begin{multicols}{2}
	\includegraphics[scale=.3]{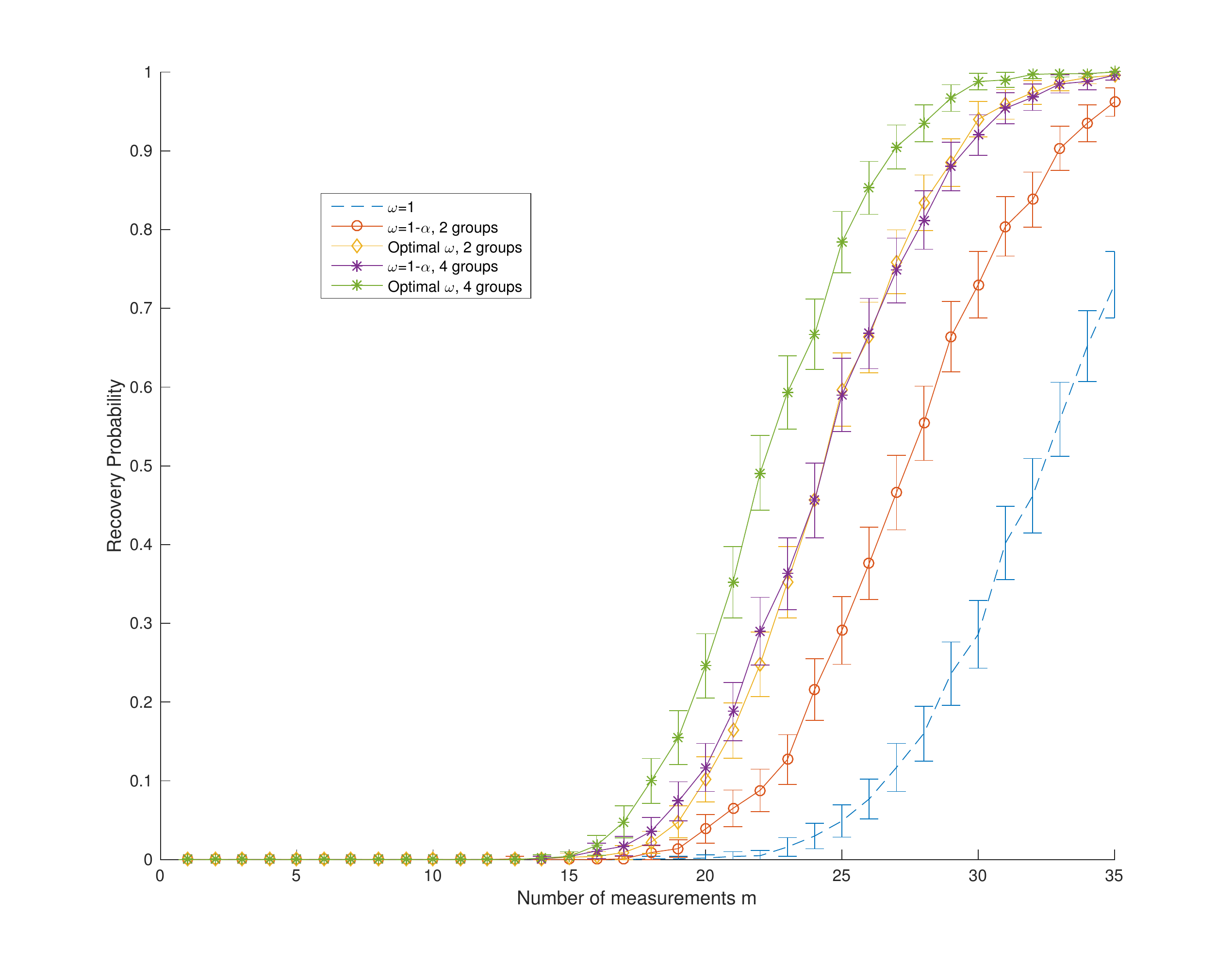}
	
	\caption{ Numerical experiment concerning the effect of using of a finer partition $S_1, S_2, S_3, S_4$ instead of forming two sets $S_1 \cup S_2 \cup S_3$ and $S_4$. The sparsity parameter is $\sigma=0.1$, $\rho=(0.05,0.1,0.15,0.7)$ and the ambient dimension is $d=100$. The optimal weights were in the first case calculated to $(0.0884,0.3742,0.5617,1)$ and in the second case $(.3742,1)$. The error bars are corresponding to 3 standard deviations. \label{fig:ks}}
	\end{multicols}
	\end{figure}

	\section*{Acknowledgment}
	\addcontentsline{toc}{section}{Acknowledgement}
	The author wishes to thank Prof. Dr.  G. Kutyniok for carefully proofreading of the paper as well as for fruitful discussions. He also wants to thank the anonymous reviewers for many useful comments. The author also acknowledges support from the Deutscher Akademischer Austauschdienst, DAAD.



%

%
%

\end{document}